\begin{document}

\markboth{Andrew Rajchert}{On the Glasner Property of Linear Maps with Prime Entries on Tori}

%%%%%%%%%%%%%%%%%%%%% Publisher's Area please ignore %%%%%%%%%%%%%%%
%
\catchline{}{}{}{}{}
%
%%%%%%%%%%%%%%%%%%%%%%%%%%%%%%%%%%%%%%%%%%%%%%%%%%%%%%%%%%%%%%%%%%%%

\title{On the Glasner Property of Linear Maps with Prime Entries on Tori}

\author{Andrew Rajchert}

\address{School of Mathematics and Statistics, The University of Sydney, \\
NSW, 2006, Australia\,\\
\email{araj9923@uni.sydney.edu.au}
}
% \author{Second Author}

% \address{Group, Laboratory, Address\\
% City, State ZIP/Zone, Country\\
% author\_id@domain\_name }

\maketitle

% \begin{history}
% \received{(Day Month Year)}
% \accepted{(Day Month Year)}
% \end{history}

\begin{abstract}
We study the quantitative Glasner property in the context of maps between tori of differing dimension, instead of as a (semi-)group action. We also only consider matrices with entries being non-constant polynomials evaluated at primes, extending the work of Nair and Velani \cite{NV} and Bulinski and Fish \cite{BF} to a more general setting.
\end{abstract}

\keywords{Glasner Property; Polynomials at Primes; Exponential Sums.}

\ccode{Mathematics Subject Classification 2020: 11J71, 11L15}

\section{Introduction}
% \let\thefootnote\relax\footnotetext{\\This research did not receive any specific grant from funding agencies in the public, commercial, or not-for-profit sectors.}
% We begin by defining the Glasner property.

\begin{definition}
\label{Glasner}
Let $X$ and $Y$ be metric spaces, with $Y$ compact, and let $\mathcal{M}$ be a set of functions mapping $X$ to $Y$. We say the triple $(X, Y, \mathcal{M})$ has the Glasner property if, for any choice of $\epsilon > 0$ and any choice of infinite set $A \subseteq X$, there exists an $f \in \mathcal{M}$ such that $f(A)$ is $\epsilon$-dense in $Y$.
\end{definition}

By $\epsilon$-dense, we mean that for any choice of $y \in Y$, there exists a point $x \in A$ such that $d(y, f(x)) < \epsilon$ with $d$ being the distance metric of $Y$. It is natural to ask what can be attained if our subset $A$ is only finite, which leads to a quantitative variant of the Glasner property which imposes additional constraints.

\begin{definition}
\label{QuantGlasner}
The triple $(X, Y, \mathcal{M})$ has the quantitative Glasner property if it has the Glasner Property and there exists some $k : (0, \infty) \to \mathbb{R}$ such that for any $\epsilon > 0$ and $A \subseteq X$ with $|A| > k(\epsilon)$, there exists an $f \in \mathcal{M}$ such that $f(A)$ is $\epsilon$-dense in $Y$.
\end{definition}

Both definitions \ref{Glasner} and \ref{QuantGlasner} are different to the standard Glasner property and quantitative variants studied in other works, as here we do not impose the restriction that $X = Y$.
We study the quantitative Glasner property in the context of the torus, $\mathbb{T}^n = \mathbb{R}^n/\mathbb{Z}^n$. That is, $X$ and $Y$ are tori (with possibly different dimension) and $\mathcal{M}$ is the set of functions given by matrix multiplication with the matrices coming from some subset of the matrices with integer entries. 
From this point on, $\mathcal{M}$ will only consist of functions of the form $f(x) = Mx$, for some coefficient $M$, possibly a matrix. In this case, it can be easier to define $\mathcal{M}$ directly from the set of coefficients being considered rather than the functions, which we will do. For example, it is known $(\mathbb{T}, \mathbb{T}, \mathbb{Z})$ has the Glasner property.

In the context of the Glasner property, the torus is a well-studied space. Alon and Peres show that for a fixed choice of $\delta > 0$, $(\mathbb{T}, \mathbb{T}, \mathbb{Z})$ has the quantitative Glasner property with $k(\epsilon) = 1/\epsilon^{2+\delta}$ as a lower bound for all positive $\epsilon < \epsilon_0$, where $\epsilon_0$ depends on the choice of $\delta$. Furthermore, they show this choice of $k(\epsilon)$ is optimal up to the choice of $\delta$ (i.e. $\delta$ cannot be chosen as zero). Alon and Peres also extend this to show $(\mathbb{T}, \mathbb{T}, P)$ has the quantitative Glasner property with the same $k(\epsilon)$, where $P$ is the set of primes. Additionally, for any non-constant polynomial $f(x) \in \mathbb{Z}[x]$, Alon and Peres extend their result to $(\mathbb{T}, \mathbb{T}, f(\mathbb{Z}))$, with $k(\epsilon)$ depending on the degree of $f$ \cite{AP}. Their Harmonic Analysis-based approach allows for relatively minor adjustments to prove a variety of different triples have the quantitative Glasner property, which has been used by many to prove other variations of this statement.

For our purposes, one of the more important variations of Alon and Peres' work is that of Nair and Velani, whose theorem we state now \cite{NV}:

\begin{theorem}
\label{theoremVN}
Let $P$ be the set of primes. Fix $\delta > 0$ and some $f \in \mathbb{Q}[x]$ where $f$ is a non-constant polynomial of degree $L \geq 1$ with $f(\mathbb{Z}) \subseteq \mathbb{Z}$. Then there exists some $\epsilon_0 = \epsilon_0(f, \delta)$ such that with $\mathcal{M} = f(P)$, $(\mathbb{T}, \mathbb{T}, \mathcal{M})$ has the quantitative Glasner property where $k(\epsilon) = 1/\epsilon^{2L + \delta}$ for all $\epsilon < \epsilon_0$.
\end{theorem}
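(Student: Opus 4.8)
\medskip
\noindent\emph{Proof strategy (proposal).}

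The plan is to adapt the harmonic-analytic/circle-method scheme of Alon--Peres \cite{AP} to this setting, arguing probabilistically: for a suitable $N$ we show that the proportion of primes $p\le N$ for which $\{f(p)a:a\in A\}$ fails to be $\epsilon$-dense is strictly less than $1$, so a good prime must exist. Write $m=|A|$ and encode $A$ by the probability measure $\mu=\tfrac1m\sum_{a\in A}\delta_a$ on $\mathbb{T}$; the pushforward of $\mu$ under $x\mapsto f(p)x$ has Fourier coefficients $\widehat{\mu}(kf(p))$. The first step is an elementary Fourier criterion: if a probability measure $\nu$ on $\mathbb{T}$ does not have $\epsilon$-dense support, then
\[
\sum_{k\neq 0}\frac{|\widehat{\nu}(k)|^{2}}{(1+\epsilon|k|)^{2}}\ \gg\ \epsilon,
\]
which one proves by testing $\nu$ against a fixed nonnegative bump rescaled to the width-$\epsilon$ gap and applying Cauchy--Schwarz; the weight $(1+\epsilon|k|)^{-2}$ makes this effectively a sum over $|k|\lesssim 1/\epsilon$ and avoids a hard Fourier truncation (which matters since $\widehat{\mu}$ does not decay). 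Applying this with $\nu=(f(p))_{*}\mu$ and summing over $p\le N$, it suffices to prove
\[
\sum_{p\le N}\ \sum_{k\neq 0}\frac{|\widehat{\mu}(kf(p))|^{2}}{(1+\epsilon|k|)^{2}}\ <\ c\,\epsilon\,\pi(N)
\]
for an absolute constant $c$, once $m>\epsilon^{-2L-\delta}$ and $N=N(f,\delta,\epsilon)$ is large enough.

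Expanding $|\widehat{\mu}(kf(p))|^{2}=m^{-2}\sum_{a,a'\in A}e\!\bigl(kf(p)(a-a')\bigr)$, the diagonal $a=a'$ contributes $m^{-1}\pi(N)$ per mode and hence $\asymp\epsilon^{-1}m^{-1}\pi(N)$ after summing the weights; this is admissible exactly when $m\gg\epsilon^{-2}$, which is the source of the ``$2$'' in the exponent (and matches the sharp power when $L=1$). Everything else must be extracted from the off-diagonal terms. For $a\neq a'$ put $\beta=a-a'$; the inner sum is $\sum_{p\le N}e\!\bigl(k\beta f(p)\bigr)$, a Weyl sum over primes of the degree-$L$ polynomial $k\beta f$, whose leading coefficient is $k\beta$ times the fixed rational leading coefficient of $f$. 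Here I would invoke the classical Vinogradov/Vaughan estimates for exponential sums over primes with polynomial argument: after a Dirichlet approximation of this leading coefficient by $b/q$, the sum is $\ll_{f,\eta}N^{1+\eta}\bigl(q^{-1}+N^{-1}+qN^{-L}\bigr)^{\rho_{L}}$ for a fixed $\rho_{L}=\rho_{L}(f)>0$. Calling $\beta$ \emph{minor} (for the mode $k$) when the relevant denominator $q$ lies in the middle range where this bound is nontrivial, each minor $\beta$ gives a saving $N^{-\rho_{L}+o(1)}$; bounding the number of pairs trivially by $m^{2}$ and summing the weights, the total minor-arc contribution is $\ll\epsilon^{-1}m^{2}\pi(N)N^{-\rho_{L}+o(1)}$, negligible once $N$ is large in terms of $\epsilon$ and $m$ — this forces $N$ to be polynomially large in $\epsilon^{-1}m$, but contributes nothing to $k(\epsilon)$.

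It remains to control the \emph{major} pairs, those $\beta=a-a'$ for which $k\beta$ times the leading coefficient of $f$ is within $\asymp q^{-1}N^{-L+o(1)}$ of a rational of denominator $q\le(\log N)^{O(1)}$; for these only the trivial bound $\ll\pi(N)$ is available, so one must show the number of such pairs is $\ll\epsilon^{2}m^{2}$ per mode. Taking $N$ large (allowed to depend on $A$) so that this window drops below the resolution of the finite set $A$, the count collapses to the arithmetic quantity $\sum_{q\le D}\#\{(a,a')\in A^{2}:a\neq a',\ q(a-a')\in\mathbb{Z}\}$ with $D\asymp\epsilon^{-1}(\log N)^{O(1)}$, a restricted additive energy of $A$ along low-denominator cyclic subgroups. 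The key observation is that if this quantity exceeds $\epsilon^{2}m^{2}$ for some mode, then some $q\le D$ has $\gg\epsilon^{2}m/D$ elements of $A$ inside a single coset of the $q$-torsion subgroup $\tfrac1q\mathbb{Z}$, which has only $q\le D$ elements; this forces $m\lesssim D^{2}\epsilon^{-2}$, contradicting $m>\epsilon^{-2L-\delta}$ once the exponents $\rho_{L},\eta$ and the range of $N$ are balanced and the spare $\delta$ absorbs the $N^{o(1)}$ and logarithmic losses. When instead $A$ \emph{is} concentrated near such a subgroup, the conclusion has to be recovered directly: for a positive proportion of primes $p\le N$ the integer $f(p)$ both spreads the perturbations of $A$ off the subgroup and has $\gcd(f(p),q)$ much smaller than $q$ (the latter via the multiplicity structure of $f$ modulo prime powers together with Dirichlet's theorem). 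Assembling the three contributions gives the displayed bound for $m>\epsilon^{-2L-\delta}$ and $N$ large, hence a good prime, hence the quantitative Glasner property with $k(\epsilon)=\epsilon^{-2L-\delta}$ for $\epsilon<\epsilon_{0}(f,\delta)$.

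The main obstacle is this major-arc step: reconciling the complete arbitrariness of the finite set $A$ — which may cluster near rationals of moderate denominator, precisely where the prime Weyl sums are not small — with the exact target exponent $2L+\delta$, and doing so with constants uniform in $f$. The diagonal analysis is routine and the minor-arc bound is a black-box application of Vinogradov--Vaughan; it is the Diophantine bookkeeping of the resonant differences $a-a'$, together with the dichotomy between ``$A$ equidistributed'' and ``$A$ trapped near a cyclic subgroup,'' that carries the content and pins down the power of $\epsilon$.
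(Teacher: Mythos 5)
This theorem is quoted from Nair and Velani \cite{VN}; the paper does not reprove it, but its proof of Theorem \ref{theorem1} (which reduces to this statement when $m=n=1$) follows the same route and shows exactly which ingredients are required. Your skeleton --- the weighted Fourier criterion in place of a hard truncation, the second-moment expansion, the diagonal term forcing $|A|\gg\epsilon^{-2}$, and the use of prime Weyl sums for non-resonant differences --- is sound and is essentially the Alon--Peres scheme \cite{AP} that both \cite{VN} and this paper use.

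The gap is in your resonant (major-arc) step, and it is fatal as written. You propose to use only the trivial bound $\ll\pi(N)$ on $\sum_{p\le N}e\bigl(k\beta f(p)\bigr)$ when $\beta=a-a'$ is close to a rational of small denominator, and to compensate by showing that the number of such pairs is $\ll\epsilon^{2}|A|^{2}$. But $A$ may be entirely contained in $\tfrac1q\mathbb{Z}$ with $q\approx|A|$, in which case every off-diagonal pair is resonant for every mode $k$, your coset-counting dichotomy yields nothing, and the trivial bound loses the theorem; your fallback (``$f(p)$ spreads the perturbations and has small $\gcd$ with $q$'') is a hope, not an argument. The actual mechanism --- Lemma \ref{lemma3} here, and Lemma \ref{lemma6} in the single-prime case --- is that for a rational difference $a/b$ in lowest terms the limit $\lim_{N}\tfrac1N\sum_{p\in P_N}e(wf(p)a/b)$ reduces, via Dirichlet's theorem on primes in progressions, to a complete exponential sum over residues $r$ modulo $b$ with $\gcd(r,b)=1$, which Hua's bound controls by $b^{-1/L+\epsilon'}$. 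That power saving in $b$, paired with the sharp pair count $H_b\le(kb)^{1+\alpha}$ of Alon--Peres Proposition 1.3 (the $m=1$ case of Lemma \ref{lemma4}), gives $\sum_b h_b\,b^{-1/L}\ll k^{2-1/L+\alpha}$, and balancing against $T/\epsilon\approx\epsilon^{-2}$ is precisely what produces the exponent $2L$. Your sketch contains no mechanism generating the factor $L$: the diagonal gives the $2$, the minor arcs are absorbed into the choice of $N$, and your major-arc count at best yields a condition such as $|A|\lesssim\epsilon^{-4}$, which does not even contradict $|A|>\epsilon^{-2L-\delta}$ when $L=1$. To repair the argument, replace the trivial major-arc bound by the complete-sum (Hua) estimate modulo the denominator of $a-a'$ and import the Alon--Peres counting lemma.
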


Another important variant is that of Bulinski and Fish, who also work with polynomials, but in the higher dimensional case \cite{BF}.

\begin{theorem}
\label{theoremBF}
Let $n \geq 1$ be an integer, $\delta > 0$, and $f_{i, j} \in \mathbb{Z}[x]$ be non-constant polynomials of degree $L_{i, j} \geq 1$ for $1 \leq i, j \leq n$. Set $L = \max_{i, j}(L_{i, j})$. Additionally, assume that the $f_{i, j}$ and 1 are mutually linearly independent over $\mathbb{Z}$ and let $\mathcal{M}$ be the set of matrices, 
\begin{align*}
\mathcal{M} = \left\{\begin{bmatrix}
f_{1, 1}(x) & f_{1, 2}(x) & ... & f_{1, n}(x)\\
f_{2, 1}(x) & f_{2, 2}(x) & ... & f_{2, n}(x)\\
 & \vdots & & \\
f_{n, 1}(x) & f_{n, 1}(x) & ... & f_{n, n}(x)\\
\end{bmatrix}, x \in \mathbb{Z} \right\}.
\end{align*}
% $\mathcal{M} = \{M \in M_{n \times n}(\mathbb{Z}) \mid \exists x \in \mathbb{Z} \text{ such that for all } 1 \leq i, j \leq n, M_{i, j} = f_{i, j}(x)$\}.

Then there exists some $\epsilon_0 = \epsilon_0(\mathcal{M}, \delta) > 0$ such that $(\mathbb{T}^n, \mathbb{T}^n, \mathcal{M})$ has the quantitative Glasner property, where $k(\epsilon) = 1/\epsilon^{n(n+1)(2L+1) + \delta}$ for all $\epsilon < \epsilon_0$.
\end{theorem}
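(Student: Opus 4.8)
The plan is to adapt the harmonic-analysis method of Alon--Peres, in the form developed for prime polynomial arguments by Nair--Velani (Theorem~\ref{theoremVN}), to the higher-dimensional target. Fix a finite set $A\subseteq\mathbb{T}^{n}$ with $|A|=k$, write $M(x)$ for the matrix whose $(i,j)$ entry is $f_{i,j}(x)$, put $e(t)=e^{2\pi i t}$, and for a prime $p$ let $\nu_{p}=\tfrac1k\sum_{a\in A}\delta_{M(p)a}$. The first step is the standard Fej\'er-kernel reduction on $\mathbb{T}^{n}$: there are $R=R(\epsilon)\asymp1/\epsilon$ and $\eta=\eta(\epsilon,n)\asymp\epsilon^{n}$ such that any Borel probability measure $\mu$ on $\mathbb{T}^{n}$ with $|\widehat{\mu}(m)|\le\eta$ for all $0<\|m\|_{\infty}\le R$ is $\epsilon$-dense. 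Hence it suffices to exhibit a single prime $p\le N$ (for an $N=N(\epsilon)$ to be chosen) with $\widehat{\nu_{p}}$ that small, and for this it is enough to prove
\[
\frac{1}{\pi(N)}\sum_{p\le N}\ \sum_{0<\|m\|_{\infty}\le R}\bigl|\widehat{\nu_{p}}(m)\bigr|^{2}<\eta^{2}.
\]

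Expanding the square and using $\langle m,M(p)v\rangle=\langle M(p)^{\mathsf T}m,v\rangle$ gives
\[
\frac{1}{\pi(N)}\sum_{p\le N}\bigl|\widehat{\nu_{p}}(m)\bigr|^{2}=\frac{1}{k^{2}}\sum_{a,b\in A}\ \frac{1}{\pi(N)}\sum_{p\le N}e\!\bigl(g_{m,a,b}(p)\bigr),
\]
where, since each entry of $M(x)^{\mathsf T}m$ is an integer polynomial in the single variable $x$, the phase $g_{m,a,b}\in\mathbb{R}[x]$ has degree at most $L:=\max_{i,j}L_{i,j}$ and its coefficient of $x^{t}$ equals $\langle F_{t}^{\mathsf T}m,\,a-b\rangle$, with $F_{t}$ the integer matrix of degree-$t$ coefficients of the $f_{i,j}$. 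The only place the $\mathbb{Z}$-independence of $\{f_{i,j}\}\cup\{1\}$ enters is here: a short linear-algebra argument shows it forces the block matrix $[F_{1}\mid\cdots\mid F_{L}]$ to have rank $n$ --- equivalently, the family $\{M(x):x\in\mathbb{Z}\}$ has no common invariant proper subtorus and $M(x)^{\mathsf T}$ no common rational eigenvector --- so for every nonzero $m$ some $F_{t}^{\mathsf T}m\neq0$, and $g_{m,a,b}$ has a genuine degree-$\ge1$ term unless $a-b$ makes every linear form $v\mapsto\langle F_{t}^{\mathsf T}m,v\rangle$ rational of controlled denominator.

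Next I would feed $g_{m,a,b}$ into the prime Weyl--Vinogradov estimate underlying Theorem~\ref{theoremVN}: if a degree-$\le L$ polynomial has some coefficient of degree $\ge1$ at distance $\ge N^{-1}$ from every rational of denominator $\le Q$, then $\bigl|\tfrac1{\pi(N)}\sum_{p\le N}e(g(p))\bigr|\ll_{L,\delta}N^{\delta}Q^{-c_{L}}$ for an absolute $c_{L}>0$. Dirichlet's theorem at scale $N$, applied to each non-constant coefficient of $g_{m,a,b}$, splits $A\times A$ into a \emph{generic} part, on which the inner average is $\ll_{L,\delta}N^{\delta}Q^{-c_{L}}$, and a \emph{resonant} part, on which $a-b$ lies within $O(1/N)$ of one of $\ll(QR)^{n+1}$ rational cosets compatible with the forms $F_{t}^{\mathsf T}m$. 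For the generic pairs, summing over the $\ll R^{n}$ frequencies and the $k^{2}$ pairs and then taking $N$ a large enough fixed power of $R$ makes the contribution below $\tfrac12\eta^{2}$. For the resonant pairs the trivial bound on the exponential sum is useless --- a single resonant coset can absorb a constant proportion of the $k^{2}$ pairs --- so one must simultaneously use the decay $Q^{-c_{L}}$ for large $Q$ and the structural fact that an $A$ clustering near a coset of a proper subtorus $H$ forces $M(p)A$ to cluster near a coset of the proper subtorus $M(p)H$, which prevents $\epsilon$-density once the denominator of $H$ is $\lesssim1/\epsilon$. Balancing $R\asymp1/\epsilon$, the resonance height $Q$, the count $\ll(QR)^{n+1}$ of resonances, and the averaging over $\pi(N)$ primes is what produces the exponent $n(n+1)(2L+1)+\delta$; choosing $\epsilon_{0}$ small enough to absorb the implied constants completes the quantitative statement, and letting $N\to\infty$ for a fixed infinite $A$ recovers the plain Glasner property.

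I expect the resonant analysis to be the main obstacle, and it is precisely where the non-degeneracy in the hypotheses is essential: one must show that if $|A|>k(\epsilon)$ then $A$ cannot be so concentrated near a proper rational subtorus that the resonant pairs overwhelm the right-hand side, and then keep every exponent honest. The delicate points are (i) that $M(x)^{\mathsf T}m$ is really a vector of $n$ simultaneous polynomial phases, so a resonance of the single combined phase $g_{m,a,b}$ must still be shown to pin $a-b$ down to a low-height rational coset, with the height tracked against $\|m\|_{\infty}\le R$; (ii) uniformity in the coefficients of the Nair--Velani estimate, so that $N^{\delta}$ and $c_{L}$ do not depend on $m$ or on the pair when summing over all $\ll R^{n}k^{2}$ terms; and (iii) the combinatorics converting concentration of $A$ on a subtorus into a usable bound, which is what fixes the combination $n(n+1)(2L+1)$ rather than a larger polynomial in $n$ and $L$.
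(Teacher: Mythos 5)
The framework you set up --- a positive kernel reduction on $\mathbb{T}^n$, a second-moment average over primes $p\le N$, and the expansion of $|\widehat{\nu_p}(m)|^2$ into exponential sums with phase $g_{m,a,b}(x)=\langle M(x)^{\mathsf T}m,\,a-b\rangle$ --- is the right skeleton and is essentially the Alon--Peres/Bulinski--Fish approach (they use the smooth bump functions of Lemma~\ref{lemma1} rather than a Fej\'er kernel, and they pass to the limit $N\to\infty$ and dispose of all pairs with $a-b$ having an irrational coordinate via Rhin's equidistribution theorem, Lemma~\ref{lemma2}, so that only rational differences survive). The proposal does not, however, close at the point you yourself identify as the main obstacle, and the ideas you sketch there would not work as stated.

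Two concrete gaps. First, the resonant (rational) pairs must be handled by an unconditional \emph{count}: if $h_b$ is the number of pairs with $a-b$ rational of exact denominator $b$, the key input is $\sum_{s\le b}h_s\le k\,b^{\,n+1}$ (Kelly--L\^e; this is the content of Lemma~\ref{lemma4} / Proposition 2.7 of \cite{BF}), which combined with the decay $b^{-1/L+\delta}$ of the complete exponential sum yields a resonant contribution $\ll T^n k^{2-(1/L-\delta)/(n+1)}$; this single estimate is the source of the factor $n+1$ in the exponent $n(n+1)(2L+1)$. Your substitute --- that $A$ clustering near a rational coset of a proper subtorus would force $M(p)A$ to cluster and hence ``prevent $\epsilon$-density'' --- is circular in this proof by contradiction: you are assuming no $M(p)A$ is $\epsilon$-dense and must derive $k\le k(\epsilon)$, so you cannot use non-density as a constraint on $A$; you need a bound on the number of resonant pairs purely in terms of $k$ and $b$. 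Second, the linear-independence hypothesis is needed for more than the rank condition you extract: for $a/b$ in lowest terms and $\|m\|_\infty\le T$ one must show that the gcd of the non-constant coefficients of $\langle m, M(x)a\rangle$ with $b$ is at most $Q\ll T^{\,n}$ (the multiplicative complexity bound, Lemma~\ref{lemma5}), since otherwise the phase could degenerate modulo a large divisor of $b$ and the Hua/Shparlinski bound $\ll (Q/b)^{1/L-\delta}$ would give nothing; rank $n$ of $[F_1\mid\cdots\mid F_L]$ alone does not provide this quantitative control. Since points (i)--(iii) that you defer are precisely these two ingredients plus the bookkeeping that assembles them into the stated exponent, the proposal is an accurate outline of the strategy but is missing the decisive steps of the proof.
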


The bound in theorem \ref{theoremBF} was recently improved upon by Shparlinski by improving on Hua's bound for exponential sums in special cases, decreasing the exponent on $\epsilon$ to $n(2n+1)L + (7n+1)/2 + \delta$ \cite{Sh2}. While this is stronger, we do not place any significant weight on the quality of the bound $k(\epsilon)$, and so for the sake of brevity, we do not attempt to merge his work into our results.

For our work, we merge theorems \ref{theoremVN} and \ref{theoremBF} by focusing on polynomials $f_{i, j}$ that are evaluated at the primes in the higher dimensional case, in addition to expanding to the case where our two tori are of different dimensions. This allows us to study the impact of the input space and output spaces separately, and how they may affect the lower bound $k(\epsilon)$. Our first theorem is the following:

\begin{theorem}
\label{theorem1}
Fix $\delta > 0$, positive integers $m, n \geq 1$, and non-constant polynomials $f_{i, j} \in \mathbb{Q}[x]$ for $1 \leq i \leq n, 1 \leq j \leq m$ of degrees $L_{i, j} \geq 1$ with $f_{i, j}(\mathbb{Z}) \subseteq \mathbb{Z}$. Let $L = \max(L_{i, j})$ and let $\mathcal{M}$ be the set of matrices,
\begin{align*}
\mathcal{M} = \left\{\begin{bmatrix}
f_{1, 1}(p_{1,1}) & f_{1, 2}(p_{1,2}) & ... & f_{1, m}(p_{1,m})\\
f_{2, 1}(p_{2,1}) & f_{2, 2}(p_{2,2}) & ... & f_{2, m}(p_{2,m})\\
 & \vdots & & \\
f_{n, 1}(p_{n,1}) & f_{n, 1}(p_{n,2}) & ... & f_{n, m}(p_{n,m})\\
\end{bmatrix}, p_{i,j} \text{ are prime} \right\}.
\end{align*}

Then there exists a $\epsilon_0 = \epsilon_0(\mathcal{M}, \delta) > 0$ such that $(\mathbb{T}^m, \mathbb{T}^n, \mathcal{M})$ has the quantitative Glasner property, where for all $\epsilon \in (0, \epsilon_0)$,
\begin{align*}
k(\epsilon) = 1/\epsilon^{4Lmn+\delta}.
\end{align*}

Furthermore, in the case that $m = 1$, we instead have the stronger bound $k(\epsilon) = 1/\epsilon^{2Ln + \delta}$ for all $\epsilon \in (0, \epsilon_0)$.
\end{theorem}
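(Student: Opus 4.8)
\medskip
\noindent\textbf{Proof strategy.}
I would run the harmonic-analytic second moment argument of Alon--Peres, in the form adapted to polynomials at primes by Nair--Velani \cite{VN} and to matrices by Bulinski--Fish \cite{BF}. Fix a small $\epsilon>0$ and a finite $A\subseteq\mathbb{T}^m$ with $N:=|A|$. Pick a nonnegative $\psi\in C^\infty(\mathbb{T}^n)$ with $\int_{\mathbb{T}^n}\psi=1$, supported in $B_{\epsilon/2}(0)$ and bounded below by $c\epsilon^{-n}$ on $B_{\epsilon/4}(0)$; then the Fourier coefficients $\widehat\psi(k)$ decay faster than any power of $\epsilon\|k\|_\infty$, so $\psi$ is, up to a negligible tail, band-limited to $\|k\|_\infty\lesssim\epsilon^{-1}$. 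Writing $e(t)=e^{2\pi i t}$, $S_A(\xi)=\sum_{a\in A}e(\xi\cdot a)$, and using $k\cdot(Ma)=(M^\top k)\cdot a$, the function $F_M(y):=\sum_{a\in A}\psi(Ma-y)$ satisfies
\begin{align*}
F_M(y)=N+\sum_{k\neq 0}\widehat\psi(k)\,e(-k\cdot y)\,S_A(M^\top k).
\end{align*}
If $MA$ is not $\epsilon$-dense then $F_M$ vanishes somewhere, and since $F_M-N$ is (up to a negligible tail) a trigonometric polynomial with frequencies in a box of side $\lesssim\epsilon^{-1}$, Cauchy--Schwarz on its spectrum gives $\|F_M-N\|_\infty\lesssim\epsilon^{-n/2}\|F_M-N\|_2$; so it suffices to find $M\in\mathcal{M}$ with $\|F_M-N\|_2^2$ below a fixed small multiple of $N^2\epsilon^n$.

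I would obtain such an $M$ by averaging over the finite subfamily $\mathcal{M}_Q\subseteq\mathcal{M}$ of matrices all of whose prime entries are at most $Q$, where $Q$ is a power of $\epsilon^{-1}$ chosen at the end. Parseval in $y$ reduces the averaged second moment to
\begin{align*}
\frac{1}{|\mathcal{M}_Q|}\sum_{M\in\mathcal{M}_Q}\|F_M-N\|_2^2=\sum_{k\neq 0}|\widehat\psi(k)|^2\cdot\frac{1}{|\mathcal{M}_Q|}\sum_{M\in\mathcal{M}_Q}|S_A(M^\top k)|^2,
\end{align*}
and since $(M^\top k)\cdot(a-b)=\sum_{i,j}k_i(a_j-b_j)f_{i,j}(p_{i,j})$ with the $nm$ primes independent, the inner average factors:
\begin{align*}
\frac{1}{|\mathcal{M}_Q|}\sum_{M\in\mathcal{M}_Q}|S_A(M^\top k)|^2=\sum_{a,b\in A}\ \prod_{i=1}^n\prod_{j=1}^m\left(\frac{1}{\pi(Q)}\sum_{p\le Q}e\!\left(k_i(a_j-b_j)f_{i,j}(p)\right)\right),
\end{align*}
$\pi(Q)$ being the number of primes up to $Q$. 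The diagonal $a=b$ contributes $N$ per $k$, hence a harmless $N\|\psi\|_2^2\asymp N\epsilon^{-n}$ overall.

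The heart of the proof --- and the step I expect to be the main obstacle --- is the off-diagonal estimate, and this is where the primes, the non-constant $f_{i,j}$ and the exponent $2L(m+1)n$ all enter. For a single factor I would invoke the Vinogradov--Vaughan bound for Weyl sums over primes: for a fixed non-constant integer-valued polynomial $f$ of degree $\ell$ and any $\eta>0$, if $\theta$ times the leading coefficient of $f$ has a rational approximation $c/q$ with $(c,q)=1$ and error at most $q^{-2}$, then $\pi(Q)^{-1}\bigl|\sum_{p\le Q}e(\theta f(p))\bigr|\ll_{f,\eta}Q^{\eta}\!\left(q^{-1}+Q^{-1}+qQ^{-\ell}\right)^{\rho(\ell)}$ with $\rho(\ell)\asymp 4^{-\ell}$. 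Thus a factor is $Q^{-c}$-small unless $k_i(a_j-b_j)$ lies on a major arc --- abnormally close to a rational whose denominator is either bounded or comparable to $Q^{\ell}$. I would then stratify the pairs $(a,b)$ and frequencies $k$ according to which of the $nm$ factors fail to be small, bound the number of such resonant configurations by elementary divisor and lattice-point counts (the count growing polynomially in $Q$, with roughly one power $Q^{2L}$ per column and a further power from the size $\epsilon^{-1}$ of the frequency band), and balance this against the $Q$-power savings on the remaining factors. Choosing $Q$ as the appropriate fixed power of $\epsilon^{-1}$, this should make the whole average at most a constant times $\epsilon^{-2L(m+1)n-\delta}N\epsilon^{n}$ plus a term $o(N^2\epsilon^n)$; hence some $M\in\mathcal{M}_Q$ has $\|F_M-N\|_2^2<N^2\epsilon^n$ once $N>1/\epsilon^{2L(m+1)n+\delta}$, and for that $M$ the set $MA$ is $\epsilon$-dense in $\mathbb{T}^n$. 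I expect the bulk of the work to lie in this major-arc accounting --- in particular in making the resonant-frequency count uniform over all $k$ with $\|k\|_\infty\lesssim\epsilon^{-1}$ while keeping $Q$ only polynomially large, and in absorbing the $Q^{\eta}$ and logarithmic losses into the $\delta$ in the exponent, which is also where the precise value of the exponent is pinned down.

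Finally, in the case $m=1$: then $A\subseteq\mathbb{T}$, the quantity $M^\top k=\sum_{i=1}^n f_{i,1}(p_{i,1})k_i$ is a single integer, and $S_A$ is a one-dimensional exponential sum, so the major-arc analysis can be carried out directly in one variable rather than coordinatewise over $\mathbb{T}^m$. This removes the union bound over the $m$ coordinates of the source torus that is responsible for the factor $m+1$ above, and re-running the same optimization yields the sharper bound $k(\epsilon)=1/\epsilon^{2Ln+\delta}$.
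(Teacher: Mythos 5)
Your first half matches the paper's argument closely: the smooth bump on $\mathbb{T}^n$, the reduction of non-$\epsilon$-density to an $L^2$ bound on $F_M-N$ via the $\epsilon^{-n/2}$ spectral Cauchy--Schwarz step, Parseval, and the factorization of the prime average over the $nm$ independent entries are all essentially what the paper does. One structural difference: the paper averages over the first $N$ primes and sends $N\to\infty$ rather than truncating at a power $Q$ of $\epsilon^{-1}$, which lets it dispense with minor arcs entirely --- for irrational $k_i(a_j-b_j)$ the limiting average vanishes by Rhin's equidistribution theorem, and for rational values with reduced denominator $b$ the limit is a complete exponential sum modulo $b$ controlled by the Hua bound, giving a saving $b^{-1/L+\epsilon'}$. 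Your finite-$Q$ Vinogradov--Vaughan route with exponent $\rho(\ell)\asymp 4^{-\ell}$ is a genuinely different (and harder) path, and it is not what produces the stated exponent.

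The genuine gap is in the off-diagonal ``major-arc accounting,'' which you correctly identify as the heart of the matter but leave unspecified at the decisive point. The final inequality must take the form $|A|^2\lesssim \epsilon^{-n}T^n|A|^{2-\gamma}$ for some $\gamma>0$: the resonant contribution has to save a power of $|A|$, not merely be polynomial in $Q$ and $\epsilon^{-1}$. Concretely one must control $\sum_{b\ge 2} h_b\cdot(\text{gain})$, where $h_b$ counts pairs of points of $A$ whose difference is rational with exact denominator $b$; a priori all $|A|^2$ pairs could be resonant, and your sketch offers no mechanism ruling this out. The paper's Lemma 4 supplies exactly this: the Kelly--L\^e bound $\sum_{s\le b}h_s\le |A|\,b^{m+1}$, fed through summation by parts, yields $\sum_b h_b\bigl((cTb^{-r}+1)^n-1\bigr)\le CT^n|A|^{2-r/(m+1)}$ with $r=1/L-\epsilon'$, and this single estimate is the source of both the $m+1$ and the $2L$ in the exponent $2L(m+1)n$. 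Relatedly, your explanation of the $m=1$ improvement is not correct: it does not come from avoiding a union bound over the source coordinates, but from replacing the Kelly--L\^e count by Alon--Peres Proposition 1.3, which for subsets of $\mathbb{T}$ gives the much stronger $\sum_{s\le b}h_s\le(|A|b)^{1+\alpha}$ and hence the exponent $2Ln$; the paper explicitly notes that this proposition does not extend naturally to $m>1$. Without a counting input of this type, quantified in $|A|$, your optimization over $Q$ cannot close.
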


Theorem \ref{theorem1} can be thought of as the extension of theorem \ref{theoremVN} to the higher dimensional case, whose bound we match when $m = n = 1$ as expected. Additionally, when compared to theorem \ref{theoremBF}, we have dropped the requirement that the $f_{i, j}$ and 1 are mutually linearly independent over $\mathbb{Z}$. We are able to remove this requirement due to each entry of the matrices in $\mathcal{M}$ depending on a single prime $p_{i,j}$ which can be chosen without regard to any other entry. In contrast, every entry of the matrices in $\mathcal{M}$ of theorem \ref{theoremBF} are determined through a single choice of integer, and thus must be considered together.

% It should be noted that the two key results that enable us to study polynomials evaluated at primes are given through lemmas \ref{lemma2} and \ref{lemma3}, which, in brief, describe bounds on exponential sums involving the $\{f_{i, j}(p_n)\}_{n \geq 1}$ sequence for any $f_{i, j}$. It is straight forward to show that if variations of lemmas \ref{lemma2} and \ref{lemma3} apply to some other sequences, $\{x_{i, j, n}\}_{n \geq 1}$ for any $i, j$, then the entries of the matrices in $\mathcal{M}$ can be replaced by the $x_{i, j, n}$. For the sake of keeping the statement of theorem \ref{theorem1} simple while not sacrificing too much generality, we focus on polynomials evaluated at primes, where the bounds presented in lemmas \ref{lemma2} and \ref{lemma3} are already known to be true.

We choose to study polynomials evaluated at primes due to several known results describing bounds on exponential sums involving sequences of the form $\{f(p_n)\}_{n \geq 1}$ where $f$ is a polynomial \footnote{Namely, the results used are described in lemmas \ref{lemma2} and \ref{lemma3}.}. It is straight forward to show that if similar bounds are known for other sequences, $\{x_{i, j, n}\}_{n \geq 1}$ for any $i, j$, then the entries of the matrices in $\mathcal{M}$ can be replaced by the $x_{i, j, n}$. For the sake of keeping the statement of theorem \ref{theorem1} simple while not sacrificing too much generality, we focus on polynomials evaluated at primes, where the required bounds on exponential sums are already known to be true.

Of interest is the necessity of how our lower bound, $k(\epsilon)$, depends on the parameters $n, m$ and $L$. Indeed, the dependence on $n$ is essential, which can quickly be shown by contradiction. If it were possible to have $k(\epsilon)$ not depend on $n$ while maintaining the quantitative Glasner Property, fix $\epsilon$ small enough and let $n$ and $A \subseteq \mathbb{T}^m$ be such that $k(\epsilon) < |A| < \lfloor 1/\epsilon \rfloor^n$. Then any mapping from $A$ to $\mathbb{T}^n$ cannot be $\epsilon$-dense since the union of balls centred at the mapped points with radii $\epsilon$ cannot cover a volume larger than the volume of $\mathbb{T}^n$. This contradicts the definition of the quantitative Glasner Property, demonstrating the necessity of $n$ in $k(\epsilon)$.

While not initially obvious, the dependence on $m$ is also required. If it were possible to have $k(\epsilon)$ not depend on $m$, then take $m$ such that $2^m > k(1/5)$. Let $A = \{0, 1/2\}^m \subseteq \mathbb{T}^m$ be the set of points with coordinates $0$ or $1/2$, and notice $|A| = 2^m > k(1/5)$. From the definition of $\mathcal{M}$, we know $\mathcal{M} \subseteq M_{n \times m}(\mathbb{Z})$. It follows that for any $M \in \mathcal{M}, MA \subseteq \{0, 1/2\}^n$, since any $\mathbb{Z}$ linear combination of $0$ and $1/2$ is equal to $0$ or $1/2$ modulo 1. Clearly the ball centred at $(1/4, 1/4, ..., 1/4) \in \mathbb{T}^n$ with radius $1/5$ does not intersect $\{0, 1/2\}^n$, so $MA$ cannot be $\epsilon$-dense with $\epsilon = 1/5$ for any choice of $M \in \mathcal{M}$. This goes against the definition of the quantitative Glasner property, and hence $k(\epsilon)$ must depend on $m$.

The dependence on $L$ is not obvious from the outset, and unfortunately it is not known if this is essential. The $L$ term originates from the use of the Hua bound to get an asymptotic upper bound on the value of an exponential sum, which in turn originates from the typical Harmonic Analysis approach to proving statements about the quantitative Glasner property. As the Hua bound is effectively tight, the $L$ dependence cannot be avoided without significant changes to the proof used.

The separate $m = 1$ case in theorem \ref{theorem1} originates from being able to use a tighter bound than usual in our proof. More specifically, when given the set $\{x_i \mid 1 \leq i \leq k\} \subseteq \mathbb{T}^m$ and an integer $b$, a key section of the proof requires us to bound the number of pairs $1 \leq i < j \leq k$ such that $x_i-x_j$ only has rational entries with small denominators. In the case of $m = 1$, we are able to make use of proposition 1.3 of Alon and Peres \cite{AP}, a statement which fails to extend naturally to the higher dimensional case. When $m > 1$ we must instead use a weaker argument based on proposition 1 from Kelly and L\^e \cite{KL}. An obvious path for future work is delving more into extending proposition 1.3 of Alon and Peres into the higher dimensional case to produce a tighter bound when $m > 1$.

% This matches Nair and Velani when $m = n = 1$ \cite{NV}. While the lower bound found is $k(\epsilon) = 1/\epsilon^{2Ln+\delta}$, which only has the exponent grow linearly with $n$, it has not been shown to be tight. The example of Alon and Peres to show their bound is optimal does not extend well to the higher dimensional case.\\
% \\

We also prove another theorem, which instead considers the case where the elements of $\mathcal{M}$ are defined from a single choice of prime, rather than a separately chosen prime for each entry of the matrix.

\begin{theorem}
\label{theorem2}
Fix $\delta > 0$, positive integers $m, n \geq 1$, and non-constant polynomials $f_{i, j} \in \mathbb{Z}[x]$, $1 \leq i \leq n$, $1 \leq j \leq m$ of degrees $L_{i, j} \geq 1$ such that the $f_{i, j}$ and $1$ are mutually linearly independent over $\mathbb{Z}$. Let $L = \max(L_{i, j})$ and $\mathcal{M}$ be the set of matrices,
\begin{align*}
\mathcal{M} = \left\{\begin{bmatrix}
f_{1, 1}(p) & f_{1, 2}(p) & ... & f_{1, m}(p)\\
f_{2, 1}(p) & f_{2, 2}(p) & ... & f_{2, m}(p)\\
 & \vdots & & \\
f_{n, 1}(p) & f_{n, 1}(p) & ... & f_{n, m}(p)\\
\end{bmatrix}, p \text{ is prime} \right\}.
\end{align*}

Then there exists a $\epsilon_0 = \epsilon_0(\mathcal{M}, \delta) > 0$ such that $(\mathbb{T}^m, \mathbb{T}^n, \mathcal{M})$ has the quantitative Glasner property, where for all $\epsilon \in (0, \epsilon_0)$,
\begin{align*}
    k(\epsilon) = 1/\epsilon^{(2L+1)(m+1)n + \delta}.
\end{align*}

Furthermore, in the case that $m = 1$, we instead have the stronger bound $k(\epsilon) = 1/\epsilon^{(2L+1)n + \delta}$ for all $\epsilon \in (0, \epsilon_0)$.
\end{theorem}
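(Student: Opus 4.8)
The plan is to mirror the proof of Theorem \ref{theorem1}, replacing the independent primes $p_{i,j}$ by a single prime $p$, and tracking how this forces a worse exponent. The standard Harmonic-Analysis route is as follows. Fix $\epsilon > 0$ and a finite set $A = \{x_1, \dots, x_k\} \subseteq \mathbb{T}^m$ with $k > k(\epsilon)$. To show some $M \in \mathcal{M}$ makes $MA$ $\epsilon$-dense in $\mathbb{T}^n$, it suffices (by a standard Fejér-kernel / smoothing argument, cf.\ Alon–Peres) to show that for a suitable $N$ the averaged exponential sum
\begin{align*}
\frac{1}{\pi(N)} \sum_{p \le N} \left| \sum_{i=1}^{k} e\bigl(\langle \mathbf{r}, M(p) x_i\rangle\bigr) \right|^2
\end{align*}
is small — more precisely $o(k^2)$ after summing over all nonzero frequency vectors $\mathbf{r} \in \mathbb{Z}^n$ with $\|\mathbf{r}\|_\infty \lesssim 1/\epsilon$ — where $M(p)$ denotes the matrix with $(i,j)$ entry $f_{i,j}(p)$. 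Expanding the square and swapping the order of summation, the key quantity becomes, for each pair $(i,j)$ and each frequency $\mathbf{r}$, an average over primes $p \le N$ of $e(p\text{-polynomial in } (x_i - x_j))$; the relevant exponent is a single-variable polynomial in $p$ of degree at most $L$ whose coefficients are the entries of $\mathbf{r}^{T}(\text{coefficient matrices of the }f_{i,j})(x_i - x_j)$.

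The two regimes to separate are the usual ones. First, the \emph{minor-arc} pairs: if the leading (or some) coefficient of this polynomial in $p$ is not well-approximable by a rational with small denominator, Weyl-type estimates for exponential sums over primes (Vinogradov / Hua, exactly the input used for Theorems \ref{theoremVN} and \ref{theoremBF}) give that the prime average is $O(N^{-c})$ for some $c = c(L) > 0$; this is where the $2L$ in the exponent is born, via the Hua bound, just as noted in the discussion after Theorem \ref{theorem1}. Second, the \emph{major-arc} / resonant pairs: one must control the number of pairs $1 \le i < j \le k$ such that $x_i - x_j \in \mathbb{Q}^m$ with a common denominator bounded by some $b = b(\epsilon)$. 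For $m > 1$ this count is handled by Proposition 1 of Kelly and L\^e \cite{KL}, which is the source of the extra multiplicative $(m+1)$ in the exponent; for $m = 1$ one instead invokes Proposition 1.3 of Alon and Peres \cite{AP}, which is tight enough to remove that factor and yields the stated improved bound $k(\epsilon) = 1/\epsilon^{(2L+1)n + \delta}$.

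The one genuinely new feature compared to Theorem \ref{theorem1} — and the main obstacle — is that with a \emph{single} prime $p$, the $nm$ entries $f_{i,j}(p)$ are no longer independent, so one cannot argue coordinate-by-coordinate and then take a product bound. Instead, the hypothesis that the $f_{i,j}$ together with $1$ are linearly independent over $\mathbb{Z}$ must be used to guarantee that for every nonzero frequency vector $\mathbf{r} \in \mathbb{Z}^n$ and every nonzero $x_i - x_j$, the resulting polynomial in $p$, namely $\sum_{i,j} r_i f_{i,j}(p)(x_i-x_j)_j$, is genuinely non-constant in $p$ (equivalently, not an integer-valued constant producing a trivial sum), so that the Weyl/Hua machinery actually applies. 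This is precisely the role of the linear-independence hypothesis in Theorem \ref{theoremBF}, and I expect the argument to carry over verbatim; the $+1$ in $(2L+1)$ reflects that, lacking independence across entries, the minor-arc analysis must be run on the full matrix at the cost of one extra unit in the exponent, exactly as in \cite{BF}. Once both regimes are in hand, choosing $N$ a suitable power of $1/\epsilon$ and summing the frequency contributions — geometrically over the minor arcs, and against the pair-count bound over the major arcs — forces the total below $k^2$ as soon as $k > 1/\epsilon^{(2L+1)(m+1)n+\delta}$, which is the claim; the optimization of powers is routine and parallels \cite{BF} and the proof of Theorem \ref{theorem1}.
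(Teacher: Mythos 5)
Your overall architecture matches the paper's: the same harmonic-analysis reduction as in Theorem \ref{theorem1}, a dichotomy on whether $x_i - x_j$ is irrational or rational, and the same pair-counting inputs (Kelly--L\^e via \cite{KL} for $m>1$, Proposition 1.3 of \cite{AP} for $m=1$). You also correctly flag the structural novelty: with a single prime one must work with the combined polynomial $\mathbf{m}^t M(p)(x_i - x_j)$, and the linear independence of the $f_{i,j}$ and $1$ is what keeps it non-degenerate. One stylistic difference: the paper does not run a finite-$N$ minor-arc analysis with a $O(N^{-c})$ saving; it sends $N \to \infty$ and disposes of the irrational pairs qualitatively via Rhin's equidistribution theorem (Lemma \ref{lemma2}), reserving all quantitative work for the rational pairs.

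Your treatment of the rational pairs, however, has a real gap. When $x_i - x_j = a/b$ in lowest terms, the paper must still extract a saving of $b^{-1/L+\delta}$ from the prime average of $\exp(2\pi i P(p)/b)$ with $P = \mathbf{m}^t M(x)a$, via Dirichlet equidistribution in residue classes plus the complete-sum bound of Lemma \ref{lemma6}. That bound only applies when the coefficients of $P$ are essentially coprime to $b$, and the entries of $\mathbf{m}$ and $a$ can conspire to introduce common factors. Controlling $\gcd(\text{coefficients of } P,\, b)$ is exactly the multiplicative complexity argument — the paper's Lemma \ref{lemma5}, a non-square extension of Corollary 2.4 of \cite{BF} — which degrades the saving to $(Q/b)^{1/L-\delta}$ with $Q \lesssim T^m$. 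That extra power of $T$, not a generic ``running the minor arcs on the full matrix,'' is where the $+1$ in $(2L+1)$ actually enters, and it is the second, quantitative, use of the linear-independence hypothesis. Your sketch never confronts this step (and appears to treat rational pairs as merely needing to be counted rather than also estimated), so as written the claimed exponent cannot be derived from it.
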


Due to the single choice of prime we index $\mathcal{M}$ over, this can be thought of as an extension of theorem \ref{theoremBF} into the non-square and prime case. Additionally, these extra restrictions do not affect the quality of the lower bound $k(\epsilon)$ found by Bulinski and Fish.

\section{Proof of theorem \ref{theorem1}}

Before we prove the main theorem, there is a variety of lemmas that will be useful. We first recall lemma 6.2 from Alon and Peres \cite{AP} on the existence and behaviour of bump functions.

\begin{lemma}
\label{lemma1}
For $0 < \epsilon < 1$, there exist non-negative functions $g_\epsilon : \mathbb{R} \to \mathbb{R}$ of period 1 such that $g_\epsilon(t) = 0$ for $\epsilon \leq |t| \leq 1/2$ and the Fourier coefficients $\widehat{g_\epsilon}(m)$ satisfy $\widehat{g_\epsilon}(0) = 1$ and,
\begin{align*}
\forall m \in \mathbb{Z}, \quad |\widehat{g_\epsilon}(m)| \leq C\exp(-\sqrt{\epsilon |m|}),
\end{align*}
where $C$ is an absolute constant. Furthermore, there exists another constant $C' > 0$ such that for any $\epsilon \in (0, 1)$,
\begin{align*}
\int_{-\epsilon}^\epsilon g_\epsilon(z)^2dz = \dfrac{C'}{\epsilon}.
\end{align*}
\end{lemma}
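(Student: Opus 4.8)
The plan is to obtain the functions $g_\epsilon$ from a single fixed ``template'' $h\colon\mathbb{R}\to\mathbb{R}$ that is non-negative, smooth, supported in $[-\tfrac12,\tfrac12]$, has $\int h=1$, and satisfies $|\widehat h(\xi)|\le C\exp(-\sqrt{|\xi|})$ for an absolute constant $C$; the $g_\epsilon$ are then produced by rescaling $h$ onto $[-\epsilon,\epsilon]$ and periodizing. To construct the template I would use an infinite convolution of rescaled indicators: fix $a_k=c_0k^{-2}$ with $\sum_k a_k\le\tfrac12$, put $\phi_k=\frac{1}{2a_k}\mathbf 1_{[-a_k,a_k]}$, and set $h_0=\phi_1*\phi_2*\cdots$. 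The partial convolutions are non-negative probability densities supported in $[-\sum_{k\le N}a_k,\sum_{k\le N}a_k]$, and because $\sum_k a_k^2<\infty$ the partial products $\prod_{k\le N}\widehat{\phi_k}$ converge locally uniformly; hence $h_0$ exists, is non-negative, is supported in $[-\tfrac12,\tfrac12]$, has $\int h_0=1$, and $\widehat{h_0}(\xi)=\prod_{k\ge1}\frac{\sin(2\pi a_k\xi)}{2\pi a_k\xi}$.

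The crux of the matter, and the step I expect to require the most care, is the subexponential bound $|\widehat{h_0}(\xi)|\le C\exp(-c\sqrt{|\xi|})$ for some absolute $c>0$. I would use the elementary inequalities $\bigl|\tfrac{\sin x}{x}\bigr|\le1$ for all $x$ and $\bigl|\tfrac{\sin x}{x}\bigr|\le\exp(-c_1x^2)$ for $|x|\le1$. Given $|\xi|$ large, discard every factor of the product with $2\pi a_k|\xi|>1$ (each has modulus at most $1$) and apply the Gaussian bound to the rest, obtaining
\[
|\widehat{h_0}(\xi)|\le\exp\!\Bigl(-c_1(2\pi\xi)^2\!\!\sum_{k:\,2\pi a_k|\xi|\le1}\!\!a_k^2\Bigr).
\]
Since $a_k=c_0k^{-2}$, the summation condition is $k\gtrsim\sqrt{|\xi|}$ and $\sum_{k>M}a_k^2\asymp M^{-3}$, so taking $M\asymp\sqrt{|\xi|}$ gives $\xi^2\sum_k a_k^2\asymp|\xi|^{1/2}$ — exactly the claimed decay, with a positive constant in the exponent that must be tracked through the estimate.

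To upgrade the exponent constant to $1$, I would replace $h_0$ by a rescaled self-convolution power: the map $t\mapsto2(h_0*h_0)(2t)$ preserves non-negativity, mass $1$, and support in $[-\tfrac12,\tfrac12]$, while turning the Fourier transform into $\xi\mapsto\widehat{h_0}(\xi/2)^2$, hence multiplying the exponent constant by $\sqrt2$; after finitely many iterations this yields the template $h$ with $|\widehat h(\xi)|\le C\exp(-\sqrt{|\xi|})$ as required.

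Finally, for $0<\epsilon<1$ I would set $g_\epsilon(t)=\sum_{n\in\mathbb Z}\epsilon^{-1}h\bigl((t-n)/\epsilon\bigr)$, which is non-negative and $1$-periodic. Since $h$ is supported in $[-\tfrac12,\tfrac12]$ and $\epsilon<1$, on $[-\tfrac12,\tfrac12]$ only the $n=0$ term contributes and it is supported in $[-\tfrac\epsilon2,\tfrac\epsilon2]$, so $g_\epsilon(t)=\epsilon^{-1}h(t/\epsilon)$ there; in particular $g_\epsilon(t)=0$ for $\epsilon\le|t|\le\tfrac12$. A change of variables gives $\widehat{g_\epsilon}(m)=\int_{-1/2}^{1/2}g_\epsilon(t)e^{-2\pi imt}\,dt=\widehat h(\epsilon m)$, whence $\widehat{g_\epsilon}(0)=\int h=1$ and $|\widehat{g_\epsilon}(m)|\le C\exp(-\sqrt{\epsilon|m|})$. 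Likewise
\[
\int_{-\epsilon}^{\epsilon}g_\epsilon(z)^2\,dz=\epsilon^{-2}\int_{-\epsilon}^{\epsilon}h(z/\epsilon)^2\,dz=\epsilon^{-1}\!\int_{\mathbb R}h(u)^2\,du=\frac{C'}{\epsilon}
\]
with $C'=\|h\|_{L^2}^2$ an absolute constant, completing the plan.
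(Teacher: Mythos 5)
Your construction is correct, and the paper itself offers no proof of this lemma --- it is quoted verbatim from Lemma 6.2 of Alon and Peres, whose underlying argument is the same Ingham-type device you use: an infinite convolution of rescaled indicators $\prod_k \widehat{\phi_k}(\xi)=\prod_k \frac{\sin(2\pi a_k\xi)}{2\pi a_k\xi}$ with $a_k\asymp k^{-2}$, giving $\exp(-c\sqrt{|\xi|})$ decay, followed by rescaling to width $\epsilon$ and periodizing. Your exponent bookkeeping ($\xi^2\sum_{k\gtrsim\sqrt{|\xi|}}a_k^2\asymp|\xi|^{1/2}$), the self-convolution trick to normalize the constant in the exponent, and the identities $\widehat{g_\epsilon}(m)=\widehat h(\epsilon m)$ and $\int_{-\epsilon}^{\epsilon}g_\epsilon^2=\|h\|_{L^2}^2/\epsilon$ all check out.
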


We will note that lemma 1 of Kelly and L\^e \cite{KL} gives the stronger result that $\widehat{g_\epsilon}(m) = 0$ for all $m$ sufficiently far enough from zero, but this does not meaningfully improve our results or significantly simplify our working, so we use the result from Alon and Peres instead.

We will also use a lemma on the equidistribution of irrational polynomials from Rhin \cite{R}. For the remainder of this paper, let $P_N$ denote the first $N$ primes.

\begin{lemma}
\label{lemma2}
Let $f$ be a real polynomial such that $f(x) - f(0)$ has at least one irrational coefficient. Then,
\begin{align*}
\lim_{N \to \infty}\dfrac{1}{N}\sum_{p \in P_N}\exp(2\pi i f(p)) = 0.
\end{align*}
\end{lemma}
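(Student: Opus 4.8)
\emph{Proof sketch.} Since $P_N$ consists of the first $N$ primes, setting $X := \max P_N$ gives $N = \pi(X)$ and $X = (1+o(1))N\log N$, so the claim is equivalent to $\sum_{p \le X}\exp(2\pi i f(p)) = o\bigl(X/\log X\bigr)$. Write $e(t) := \exp(2\pi i t)$, and note that replacing $f$ by $f - f(0)$ multiplies the sum only by a fixed unimodular constant, so we may assume $f(0)=0$ and write $f(x)=\sum_{j=1}^{L} a_j x^j$ with some coefficient $a_k$, $1 \le k \le L$, irrational. The first move is the standard reduction to a von Mangoldt--weighted sum: by partial summation it suffices to show $\sum_{n\le X}\Lambda(n)e(f(n)) = o(X)$, the contribution of proper prime powers being $O(\sqrt X \log X)$.

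Next I would run Vinogradov's method for exponential sums over primes, i.e.\ apply Vaughan's identity to decompose $\sum_{n\le X}\Lambda(n)e(f(n))$ into $O(\log^2 X)$ sums of two shapes: Type I sums $\sum_{d\le D} c_d \sum_{m\le X/d} e(f(dm))$ with $|c_d|\le \tau(d)$, and Type II (bilinear) sums $\sum_{d}\sum_{m} a_d b_m\, e(f(dm))$ over a dyadic box with $d,m > D'$ and $dm \le X$, where $|a_d|,|b_m|$ are bounded by divisor functions and $D,D'$ are fixed small powers of $X$ chosen at the end.

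For a Type I sum, fix $d$: the polynomial $m\mapsto f(dm)$ has degree-$k$ coefficient $a_k d^k$, which is irrational since $a_k$ is. Weyl's inequality bounds $\sum_{m\le X/d}e(f(dm))$ in terms of a rational approximation to $a_k d^k$ furnished by Dirichlet's theorem; because the denominators of any sequence of good rational approximations to an irrational number tend to infinity, this inner sum is $o(X/d)$ with enough uniformity to sum $|c_d|/d$ over $d\le D$ and obtain $o(X)$. For a Type II sum, apply Cauchy--Schwarz in $d$, expand the square and swap the order of summation to reach $\sum_{m,m'} b_m\overline{b_{m'}} \sum_{d} e\bigl(f(dm)-f(dm')\bigr)$. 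The diagonal $m=m'$ contributes $O\bigl(X^{1+o(1)}/D'\bigr)$, while for $m\ne m'$ the polynomial $d\mapsto f(dm)-f(dm')$ has degree-$k$ coefficient $a_k\bigl(m^k-(m')^k\bigr)\ne 0$, hence irrational, so Weyl's inequality applies again; after the same Dirichlet/irrationality argument and averaging over $m,m'$ the off-diagonal part is $o(X)$ as well. A balanced choice such as $D=D'=X^{1/3}$ then makes every piece $o(X)$.

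The step I expect to be the main obstacle is \emph{uniformity}: Weyl's inequality only delivers a genuine saving when the relevant coefficient admits a rational approximation with denominator in a prescribed window, so converting ``$a_k$ irrational'' into an estimate uniform over the outer parameters (the $d$'s in Type I, the pairs $(m,m')$ in Type II) requires the usual Vinogradov pigeonhole argument: partition the outer range by the size of the denominator of the best approximation to the induced coefficient, and show that only a negligible proportion of outer parameters can give a small denominator. The possibility $k<L$, where the irrational coefficient is not the leading one, is handled simply by applying this argument to the $k$-th coefficient rather than the top one. Since only the qualitative conclusion $o(N)$ is required, no effective rate needs to be tracked, which keeps the bookkeeping light; in any case the statement can be cited verbatim from Rhin \cite{R}, whose proof follows exactly this outline.
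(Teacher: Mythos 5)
Your sketch is correct and matches the paper's treatment: the paper gives no proof of this lemma, citing it directly from Rhin \cite{R}, and Rhin's argument is exactly the Vaughan/Vinogradov decomposition into Type I and bilinear sums with Weyl's inequality plus the pigeonhole uniformity step that you outline (including the reduction to the case where the irrational coefficient need not be the leading one). Nothing further is needed beyond the citation.
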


Additionally, we summarise part of the proof of theorem 1 from Nair and Velani in \cite{NV} in the following lemma,

\begin{lemma}
\label{lemma3}
Let $f: \mathbb{N} \to \mathbb{N}$ be a non-constant polynomial, and $a/b$ be a non-zero rational in reduced form. Then for any choice of $\epsilon' > 0$, there exists a constant $c_2 = c_2(\epsilon', f)$ such that,
\begin{align*}
\sum_{w = 1}^T \left| \lim_{N \to \infty} \dfrac{1}{N} \sum_{p \in P_N} \exp\left(2\pi i wf(p) \dfrac{a}{b}\right)\right| \leq c_2 T b^{-1/\deg(f) + \epsilon'}.
\end{align*}
\end{lemma}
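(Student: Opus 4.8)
The plan is to reduce the sum over $w$ to a sum over arithmetic progressions modulo $b$ and then exploit equidistribution. First I would split the range $1 \le w \le T$ according to $d = \gcd(w, b)$; write $w = d w'$ and $b = d b'$ with $\gcd(w', b') = 1$, so that $w f(p) \frac{a}{b} = w' f(p) \frac{a}{b'}$ with $\gcd(w' a, b') = 1$ (since $\gcd(a,b)=1$ already forces $\gcd(a, b') = 1$). The key observation is that the limit $\lim_{N\to\infty}\frac1N\sum_{p\in P_N}\exp(2\pi i w' f(p) a / b')$ depends only on the residue class of $w' a$ modulo $b'$, so I would group terms by the value of $r \equiv w' a \pmod{b'}$. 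For each fixed $b' \mid b$, the number of $w \le T$ with $\gcd(w,b) = b/b'$ is at most $T b'/b + 1$, and these are distributed roughly evenly over the $\varphi(b')$ admissible residue classes $r$.

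The heart of the matter is to bound, for each divisor $b'$ of $b$ and each residue $r$ coprime to $b'$, the quantity
\begin{align*}
S(b', r) := \left|\lim_{N\to\infty}\frac1N\sum_{p\in P_N}\exp\left(2\pi i \frac{r}{b'} f(p)\right)\right|.
\end{align*}
Here I would invoke the quantitative equidistribution of $\frac{r}{b'} f(p)$ over the primes. Writing $f(x) = \sum_{\ell=0}^{L} a_\ell x^\ell$ with $L = \deg f$, the polynomial $\frac{r}{b'}(f(x) - f(0))$ has leading coefficient $\frac{r a_L}{b'}$, which is rational with denominator (in lowest terms) a divisor of $b'$ that is large when $b'$ is large. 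By Rhin's theorem (Lemma \ref{lemma2}) the limit is $0$ whenever $b' > $ (the denominator of $a_L$); for a quantitative rate one uses the standard Weyl-type bound for exponential sums over primes (Vinogradov's method), which gives $S(b',r) \ll (b')^{-1/L + \epsilon''}$ up to a constant depending on $f$ and $\epsilon''$ — this is precisely the input that Nair and Velani extract, so I would cite it as established in \cite{VN}.

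Putting the pieces together: summing $S(b',r)$ over the $O(T b'/b)$ values of $w$ mapping to a given $b'$, and over the $\varphi(b') \le b'$ residues $r$, contributes at most a constant times
\begin{align*}
\sum_{b' \mid b} \left(\frac{T b'}{b} + 1\right) b' \cdot (b')^{-1/L + \epsilon''} \ \ll\ \frac{T}{b}\sum_{b' \mid b} (b')^{2 - 1/L + \epsilon''} \ \ll\ \frac{T}{b}\, b^{2 - 1/L + \epsilon''}\, d(b),
\end{align*}
where $d(b)$ is the divisor function; absorbing $d(b) = O_{\epsilon'}(b^{\epsilon'/2})$ and choosing $\epsilon''$ small in terms of $\epsilon'$ yields the claimed bound $c_2 T b^{-1/L + \epsilon'}$ after checking that the "$+1$" terms and the contribution of small $b'$ are also dominated by $T b^{-1/L+\epsilon'}$ (they are, since each individual $S(b',r) \le 1$ and there are $O(b \cdot d(b))$ of them, which is $\ll T b^{-1/L+\epsilon'}$ as soon as $T$ is not too small — and for small $T$ the bound is trivial after adjusting $c_2$). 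The main obstacle is getting the exponent of $b'$ in the power-saving bound $S(b',r) \ll (b')^{-1/L+\epsilon''}$ sharp enough to survive the factor $(b')^2$ coming from the counting; this is exactly where the degree $L$ (rather than, say, $2L$) enters, and it relies on the full strength of the Vinogradov–Weyl estimate as used in \cite{VN} rather than a naive completion-of-sums argument.
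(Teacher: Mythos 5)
The paper offers no proof of this lemma at all: it is explicitly a summary of part of the proof of Theorem 1 of Nair and Velani, so you are reconstructing their argument from scratch. Your overall strategy is the right one and is essentially theirs: group the $w\le T$ by $d=\gcd(w,b)$, reduce each limit to a complete exponential sum modulo $b'=b/d$ via the equidistribution of primes in reduced residue classes, obtain the power saving $S(b',r)\ll (b')^{-1/L+\epsilon''}$, and sum over the divisors of $b$. (One attribution quibble: that power saving comes from Hua's bound on the complete rational sum $\sum_{r \bmod b'}e(P(r)/b')$ divided by $\varphi(b')$, not from a Vinogradov--Weyl estimate for primes; the prime input is only needed to pass to the complete sum. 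The paper itself relies on exactly this mechanism elsewhere, cf.\ Lemma \ref{lemma6}.)

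However, your final bookkeeping contains a double count that, as written, loses a full factor of $b$. The quantity $Tb'/b+1$ already counts \emph{all} $w\le T$ with $\gcd(w,b)=b/b'$; multiplying it again by the number $\varphi(b')\le b'$ of admissible residues $r$ overcounts by roughly $b'$. Consequently your displayed total $\frac{T}{b}\sum_{b'\mid b}(b')^{2-1/L+\epsilon''}\ll T\,b^{1-1/L+\epsilon''}d(b)$ exceeds the claimed $c_2Tb^{-1/L+\epsilon'}$ by a factor of $b$, and the assertion that it ``yields the claimed bound'' does not follow. The repair is immediate: each such $w$ contributes a single term $\ll(b')^{-1/L+\epsilon''}$, so the main contribution is $\sum_{b'\mid b}\frac{Tb'}{b}(b')^{-1/L+\epsilon''}=Tb^{-1/L+\epsilon''}\sum_{d\mid b}d^{1/L-1-\epsilon''}\le T\,d(b)\,b^{-1/L+\epsilon''}$, since $1/L\le 1$ makes each summand at most $1$; absorbing $d(b)\ll_{\epsilon'}b^{\epsilon'/2}$ gives the lemma, and the ``$+1$'' terms only occur for $b'\ge b/T$, where $(b')^{-1/L+\epsilon''}\le T^{1/L}b^{-1/L+\epsilon''}\le Tb^{-1/L+\epsilon''}$, so they are harmless as you suggest.
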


Our final necessary lemma is adapted from proposition 2.7 from Bulinski and Fish \cite{BF}, which is in turn based on proposition 1.3 from Alon and Peres \cite{AP}, which we alter and prove in a version suitable for our purposes. We will first define some helpful notation.

\begin{definition}
Fix $S = \{z_1, ..., z_k\} \in \mathbb{T}^m$, a set of $k$ distinct elements in the torus. Then define $h_b = h_b(S)$ to be the number of pairs $(i, j)$ such that $1 \leq i < j \leq k$ and $b$ is the smallest positive integer such that, $b(z_i-z_j) \in \mathbb{Z}^m$. Additionally, let $H_b = \sum_{t=1}^b h_t$.

Furthermore, define $s_b = s_b(S)$ to be the number of pairs $(i, j)$ such that $1 \leq i < j \leq k$, $z_i - z_j$ only contains rational entries and the maximum denominator of the reduced rational entries is $b$. Similarly, let $S_b = \sum_{t=1}^b s_t$.
\end{definition}

Observe that the definition of $h_b$ and $s_b$ maps each pair of points in $S$ to at most a single integer, and hence $H_b \leq k^2$ and $S_b \leq k^2$ always holds true. Also observe that in the case $m = 1$, we have $s_b = h_b$ for all $b$. While we will usually refer to $s_b$ throughout the proof of theorem \ref{theorem1} and $h_b$ in the proof of theorem \ref{theorem2}, we define both here to make the connection between the two clear.

\begin{lemma}
\label{lemma4}
Fix any set $S = \{x_1, ..., x_k\} \subseteq \mathbb{T}^m$ and $s_b = s_b(S)$. Then for any choice of $T > 0$, $c > 0$ and $r \in (0, 1)$, there exists a constant $C$ independent of $T$ and $k$ such that, 
\begin{align*}
\sum_{b = 2}^\infty s_b \left(\left( cTb^{-r}+1\right)^n-1\right) \leq CT^nk^{2-r/(2m)}.
\end{align*}

Furthermore, in the case of $m = 1$, for any $\alpha > 0$, there exists a $k_0$ large enough such that if $k > k_0$,
\begin{align*}
\sum_{b = 2}^\infty s_b \left(\left( cTb^{-r}+1\right)^n-1\right) \leq CT^nk^{2+\alpha-r}.
\end{align*}

\end{lemma}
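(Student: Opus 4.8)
The plan is to reduce the statement, via the binomial theorem, to a single weighted pair-count estimate — namely $\Sigma(r):=\sum_{b\ge 2}h_b b^{-r}\le C_{m,r}k^{2-r/(m+1)}$ with $C_{m,r}$ depending only on $m$ and $r$ — and then to prove that estimate by balancing two elementary bounds on the $h_b$. Indeed, expanding $(cTb^{-r}+1)^n-1=\sum_{\ell=1}^{n}\binom{n}{\ell}(cT)^\ell b^{-r\ell}$ and using that $b^{-r\ell}\le b^{-r}$ for $b\ge 2$ and $\ell\ge 1$,
\begin{align*}
\sum_{b\ge 2}h_b\bigl((cTb^{-r}+1)^n-1\bigr)\;\le\;\bigl((1+cT)^n-1\bigr)\,\Sigma(r)\;\le\;(1+c)^nT^n\,\Sigma(r),
\end{align*}
the last step valid for $T\ge 1$, which is the only regime used in the applications; so $\Sigma(r)\le C_{m,r}k^{2-r/(m+1)}$ gives the first assertion with $C=(1+c)^nC_{m,r}$, a constant depending only on $n,c,m,r$. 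It is precisely for this bookkeeping that the $T^n$ must be pulled out of $(1+cT)^n-1$ as a whole rather than estimated term by term, so that $C$ is genuinely independent of $T$ and $k$.

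For $\Sigma(r)$ I would combine the trivial bound $h_b\le\binom{k}{2}<\tfrac12 k^2$ with a capacity bound: if $b(z_i-z_j)\in\mathbb Z^m$ then $z_i$ and $z_j$ lie in a common coset of the finite subgroup $\tfrac1b\mathbb Z^m/\mathbb Z^m\le\mathbb T^m$, which has exactly $b^m$ elements, so each such coset contains at most $b^m$ of the distinct points $z_i$; writing $n_\gamma$ for the number of $z_i$ in coset $\gamma$ and using convexity of $\binom{\cdot}{2}$,
\begin{align*}
h_b\;\le\;\sum_{d\mid b}h_d\;=\;\sum_\gamma\binom{n_\gamma}{2}\;\le\;\tfrac12\bigl(\max_\gamma n_\gamma\bigr)\sum_\gamma n_\gamma\;\le\;\tfrac12\,k\,b^m.
\end{align*}
Splitting $\Sigma(r)$ at a parameter $B$, inserting the capacity bound for $b\le B$ and the trivial bound for $b>B$, one gets $\Sigma(r)\ll_{m,r}k\,B^{\,m+1-r}+k^2B^{-r}$ (using $m-r>-1$ to sum $\sum_{b\le B}b^{\,m-r}$), and the choice $B\asymp k^{1/(m+1)}$ balances both terms at $\asymp k^{2-r/(m+1)}$, as required.

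For the refined bound when $m=1$ the scheme is identical, but the elementary capacity bound $\sum_{b\le B}h_b\ll kB^2$ is replaced by the sharper one-dimensional estimate coming from Proposition 1.3 of Alon and Peres \cite{AP}, which for every $\eta>0$ yields a genuine power saving of the shape $\sum_{b\ge 2}h_b b^{-1}\ll_\eta k^{1+\eta}$ (once $k$ is large). Then $\sum_{2\le b\le B}h_b b^{-r}\le B^{1-r}\sum_{b\le B}h_b b^{-1}\ll_\eta B^{1-r}k^{1+\eta}$; played off against the bound $\tfrac12 k^2B^{-r}$ for $b>B$ and optimised at $B\asymp k^{1-\eta}$, this gives $\Sigma(r)\ll_\eta k^{2-r+r\eta}$, so taking $\eta<\alpha/r$ and absorbing the $\eta$-dependent constant into a threshold $k_0$ yields the exponent $2+\alpha-r$ for all $k>k_0$.

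The elementary part — the binomial reduction and the balancing of the two bounds on $h_b$ — I expect to be routine. The step to handle carefully is the $m=1$ refinement: one must invoke Proposition 1.3 of Alon–Peres in exactly the right quantitative form and re-optimise the cutoff $B$, checking that the logarithmic and $\eta$-type losses in that proposition are precisely what is absorbed by the extra $\alpha$ in the exponent and by the passage to large $k$; pinning down the shape of that input estimate is the main obstacle.
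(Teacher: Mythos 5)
Your proposal is correct, and it reaches both bounds by a genuinely different and somewhat more elementary route than the paper. The paper keeps the weight $w(b)=(cTb^{-r}+1)^n-1$ intact, cites the bound $H_b=\sum_{s\le b}h_s\le kb^{m+1}$ from Kelly--L\^e via Bulinski--Fish, and then runs an Abel summation split at $b=k^{1/(m+1)}$, controlling the differences $w(b)-w(b+1)$ by a convexity/derivative estimate; you instead collapse the weight at the outset via the binomial expansion, $w(b)\le\bigl((1+cT)^n-1\bigr)b^{-r}$ for $b\ge 2$, which removes all $T$- and $n$-dependence from the combinatorial part and reduces everything to $\Sigma(r)=\sum_b h_b b^{-r}$. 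Your coset-counting argument giving the pointwise bound $h_b\le\sum_{d\mid b}h_d\le\tfrac12 kb^m$ is a clean self-contained substitute for the cited $H_b\le kb^{m+1}$ (it implies it), and your split at $B\asymp k^{1/(m+1)}$ is the same balance point the paper uses, just without the summation by parts. For $m=1$ your input $\sum_b h_b b^{-1}\ll_\eta k^{1+\eta}$ is in fact the exact form of Proposition 1.3 of Alon--Peres (the paper's $H_b\le(kb)^{1+\alpha}$ is the weaker consequence it extracts), so the step you flag as the main obstacle is not one; your optimisation at $B\asymp k^{1-\eta}$ gives the exponent $2-r+r\eta$, matching the paper's $2+\alpha-r$ after choosing $\eta$ small. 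One caveat worth recording: your reduction needs $T$ bounded below (you assume $T\ge 1$) so that $(1+cT)^n-1\ll T^n$; the lemma as stated allows all $T>0$, for which the claimed inequality with a $T$-independent constant actually fails as $T\to 0$ when $n>1$ --- but the paper's own proof has the identical implicit restriction (its factor $(cT\cdot 2^{-r}+1)^{n-1}$ is absorbed into $CT^{n-1}$ the same way), and only large $T=\lfloor 4\epsilon^{-1}\log^2(1/\epsilon)\rfloor$ is ever used, so this is a defect of the statement rather than of your argument.
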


\begin{proof}
% We start in a similar fashion to the proof of proposition 2.7 by Bulinski and Fish, who initially use a result from Kelly and L\^e \cite{KL}. From Bulinski and Fish, we know $H_b \leq kb^{m+1}$ where $H_b = \sum_{s = 1}^b h_s$ \cite{BF}. Fix $Q > k^{1/(m+1)}$ such that $h_b = 0$ for all $b > Q$. Then, by applying summation by parts and using $H_1$ = 0,
We start in a similar fashion to the proof of proposition 2.7 by Bulinski and Fish in \cite{BF}, who initially adapt proposition 1 by Kelly and L\^e in \cite{KL}.

% First, observe that for any $x_i - x_j$ that contains only reduced rational entries with denominators being at most $b$, there exists a $b' < b^m$ such that $b'(x_i - x_j) \in \mathbb{Z}^m$. It follows by definition that $S_b \leq H_{b^m}$.

% Secondly, note that for any integer $s \geq 1$ and $i \in \{1, ..., k\}$, there are at most $s^m$ distinct values of $x_j$ such that $s(x_i - x_j) \in \mathbb{Z}^m$. Hence, by enumerating over the choices of $s$ and $i$,
First, observe that for any $b \geq 1$, there are at most $b^{2m}$ values of $z \in \mathbb{T}^m$ that only have rational entries with reduced denominators being at most $b$. It follows that for any fixed $i$, there are at most $b^{2m}$ values of $j$ such that $x_i-x_j$ has only rational entries with denominators being at most $b$. Since there are $k$ choices for $i$, we find $S_b \leq kb^{2m}$.
% \begin{align*}
% H_b &= \sum_{s = 1}^b h_s \leq \sum_{s=1}^b ks^m \leq \sum_{s=1}^b kb^m \leq kb^{m+1}\\
% \Longrightarrow S_b &\leq H_{b^m} \leq kb^{m(m+1)}.
% \end{align*}

Fix $Q > k^{1/(2m)}$ large enough such that $s_b = 0$ for all $b > Q$ and note that, by definition, $S_1 = 0$. By applying summation by parts, we can show the following,
\begin{align}
\sum_{b = 2}^\infty s_b \left(\left( cTb^{-r}+1\right)^n-1\right) &= \sum_{b = 2}^Q \left(S_b - S_{b-1}\right) \left(\left( cTb^{-r}+1\right)^n-1\right) \nonumber\\
&= S_Q\left(\left(cTQ^{-r}+1)\right)^n-1\right) \label{1}\\
&+ \sum_{2 \leq b < k^{1/(2m)}} S_b\left(\left(cTb^{-r}+1\right)^n - \left(cT\left(b+1\right)^{-r}+1\right)^n\right) \label{2}\\
&+ \sum_{k^{1/(2m)} \leq b < Q} S_b\left(\left(cTb^{-r}+1\right)^n - \left(cT\left(b+1\right)^{-r}+1\right)^n\right). \label{3}
\end{align}

We now seek to bound this expression when we send $Q \to \infty$, which only affects terms (\ref{1}) and (\ref{3}). For (\ref{1}), we recall that we always have $S_Q \leq k^2$, which implies this term converges to zero. For (\ref{3}), using the same inequality returns a telescopic sum, which can be evaluated in a relatively straight-forward manner. For (\ref{2}), we first consider the intermediate function $f(b) = \left(cTb^{-r}+1\right)^n$. By a binomial expansion and by noting $cT$ and $r$ are positive, $f(b)$ is convex for all $b > 0$, and hence by a linear approximation, $f(b+1) \geq f(b) + f'(b)$. Rearranging this and evaluating the derivative, we have the inequality,
\begin{align*}
\left(cTb^{-r}+1\right)^n - \left(cT\left(b+1\right)^{-r}+1\right)^n &\leq rcTnb^{-r-1}\left(cTb^{-r}+1\right)^{n-1}
\end{align*}

Using this expression, the bound $S_b \leq kb^{2m}$, and $2m-r-1 > 0$ to this expression, we find that there exists a constant $C_1$ depending on $c, r, m$ and $n$ such that, 
\begin{align*}
&\sum_{2 \leq b < k^{1/(2m)}} S_b\left(\left(cTb^{-r}+1\right)^n - \left(cT\left(b+1\right)^{-r}+1\right)^n\right)\\
&\leq \sum_{2 \leq b < k^{1/(2m)}} kb^{2m} rcTnb^{-r-1}\left(cTb^{-r}+1\right)^{n-1}\\
&\leq rcTnk\left(cT\cdot 2^{-r}+1\right)^{n-1}\sum_{2 \leq b < k^{1/(2m)}} b^{2m-r-1}\\
&\leq rcTnk\left(cT\cdot 2^{-r}+1\right)^{n-1} \int_{2}^{k^{1/(2m)} + 1} b^{2m-r-1}db\\
&\leq rcTnk\left(cT\cdot 2^{-r}+1\right)^{n-1}\left(\left(k^{1/(2m)}+1\right)^{2m-r} - 2^{2m-r}\right)\\
&\leq C_1 T^n k^{2-r/(2m)}
\end{align*}

Putting our bounds on (\ref{1}), (\ref{2}) and (\ref{3}) together, we find that by sending $Q \to \infty$,
\begin{align*}
\sum_{b = 2}^\infty s_b \left(\left( cTb^{-r}+1\right)^n-1\right) &\leq C_1T^n k^{2-r/(2m)} + k^2\left(\left(cT\lceil k^{1/(2m)}\rceil^{-r} + 1\right)^n - 1\right)\\
&\leq C T^n k^{2-r/(2m)}.
\end{align*}

For the case of $m = 1$, we can repeat the same arguments, but instead, for any $\alpha > 0$, we have $H_b \leq (kb)^{1+\alpha}$ for all $k > k_0(\alpha)$. This tighter bound on $H_b$ comes directly from proposition 1.3 of Alon and Peres \cite{AP}. Also note that $s_b = h_b$ when $m=1$, so $S_b \leq (kb)^{1+\alpha}$ for all $k > k_0(\alpha)$. By splitting the sum at the index $b = k$ instead of $b = k^{1/(2m)}$, we instead find the following corresponding bound,

\begin{align*}
\sum_{b = 2}^\infty s_b \left(\left( cTb^{-r}+1\right)^n-1\right) &\leq C_1T^n k^{2+\alpha-r} + k^2\left(\left(cT k^{-r} + 1\right)^n - 1\right)\\
&\leq C T^n k^{2+\alpha-r}.
\end{align*}

\end{proof}

We are now ready to prove theorem \ref{theorem1}. Unfortunately, there are a large amount of algebraic manipulations involved in the details of this proof, but this is done to make full use of $\mathcal{M}$ being indexed over many primes instead of just one, and minimising $k(\epsilon)$.

\begin{proof}[Proof of theorem \ref{theorem1}]
We use the Harmonic Analysis approach of Alon and Peres \cite{AP}. Let $\epsilon > 0$ and suppose $X = \{x_1, ..., x_k\}$ is a set of $k$ distinct points in $\mathbb{T}^m$ such that $MX$ is not $\epsilon$-dense for any $M \in \mathcal{M}$.
% We will also assume $k > 1/\epsilon^{2Ln}$, as the statement of the theorem only focuses on large $k$. 

Let $h_\epsilon(z_1, z_2, ..., z_n) = \prod_{i=1}^n g_\epsilon(z_i)$ where the $g_\epsilon$ are bump functions from lemma \ref{lemma1}. It is straightforward to show using the integral formula for a Fourier coefficient that for all $m_i \in \mathbb{Z}$, $\widehat{h}_\epsilon(m_1, ..., m_n) = \prod_{i = 1}^n \widehat{g}_\epsilon(m_i)$. Applying lemma \ref{lemma1} to each $\widehat{g}_\epsilon$ will give us bounds on the Fourier coefficients of $h_\epsilon$.

For any $N > 1$, we can also index the elements of $\mathcal{M}$ by writing $M = M(\mathbf{p}) \in \mathcal{M}$ where $\mathbf{p} = (p_{1, 1}, p_{1, 2}, ..., p_{n, m}) \in P_N^{mn}$. It follows by the definition of $h_\epsilon$ that for every $M \in \mathcal{M}$, there exists a $\lambda_M \in \mathbb{T}^n$ such that,

\begin{align*}
0 &= \dfrac{1}{N^{mn}}\sum_{\mathbf{p} \in P_N^{mn}}\sum_{j = 1}^k h_\epsilon(Mx_j + \lambda_M)\\
&= \dfrac{1}{N^{mn}}\sum_{\mathbf{p} \in P_N^{mn}}\sum_{j = 1}^k\sum_{\mathbf{m} \in \mathbb{Z}^n}\widehat{h}_\epsilon(\mathbf{m})e_\mathbf{m}\left( Mx_j + \lambda_M\right).
\end{align*}

We note we attain equality in the Fourier series due to the Fourier coefficients converging to zero faster than any polynomial, and the periodicity of $h_\epsilon$. Here we let $e_\mathbf{m}(v) = e(2\pi i \mathbf{m} \cdot v)$ for $v \in \mathbb{T}^n$. Now we separate this into parts depending on the value of $\mathbf{m}$. Applying the triangle inequality, we can extract the $\mathbf{m} = \mathbf{0}$ term from the series. Let $S_T = \{\mathbf{m} \in \mathbb{Z}^n \mid 0 < \|\mathbf{m}\|_\infty \leq T\}$ be the lattice cube centred at 0, excluding 0. Then by rearranging our previous expression and applying the triangle inequality,
\begin{align}
k &\leq \left|\dfrac{1}{N^{mn}}\sum_{\mathbf{p} \in P_N^{mn}}\sum_{j = 1}^k\sum_{\mathbf{m} \in S_T} \widehat{h}_\epsilon(\mathbf{m})e_\mathbf{m}\left( Mx_j + \lambda_M\right)\right| \nonumber\\
&+ k \sum_{\| \mathbf{m} \|_{\infty} > T} \left| \widehat{h}_\epsilon(\mathbf{m})\right| \label{k inequality}
\end{align}

We now bound this second sum. By lemma \ref{lemma1} and the fact $\widehat{h_\epsilon}(\mathbf{m}) = \prod_{i = 1}^n \widehat{g_\epsilon}(m^i)$ (where the $i$ superscript denotes taking the $i$-th coordinate) is invariant under permutation of the coordinates, we know there exist constants $C$ and $C' = C'(n)$ such that,
\begin{align*}
\sum_{\| \mathbf{m} \|_{\infty} > T} \left| \widehat{h}_\epsilon(\mathbf{m})\right| &\leq 2n\sum_{\mathbf{m} \in \mathbb{Z}^n, m^1 \geq T}\left| \widehat{h}_\epsilon(\mathbf{m})\right|\\
&\leq 2n\sum_{\mathbf{m} \in \mathbb{Z}^n, m^1 \geq T} C \exp\left(-\sum_{i = 1}^n \sqrt{\epsilon m^i}\right)\\
&\leq C' \sum_{m^1 = T}^\infty \exp\left(-\sqrt{\epsilon m^1}\right).
\end{align*}

Directly from the proof of proposition 6.1 by Alon and Peres \cite{AP}, if we set $T = \lfloor 4/\epsilon \log^2(1/\epsilon) \rfloor$, then there exists some $\epsilon_1$ depending on $n$ such that when $\epsilon \in (0, \epsilon_1)$, this expression is less than $1/2$ . Using such a value of $\epsilon$, we can substitute this bound into (\ref{k inequality}) to find,
\begin{align*}
\dfrac{k}{2} &\leq \left|\dfrac{1}{N^{nm}} \sum_{\mathbf{p} \in P_N^{mn}}\sum_{j = 1}^k\sum_{\mathbf{m} \in S_T} \widehat{h}_\epsilon(\mathbf{m})e_\mathbf{m}\left( Mx_j + \lambda_M\right)\right|.
\end{align*}

Now squaring and using the Cauchy-Schwartz inequality twice, we can separate the Fourier coefficients from the exponential terms.
\begin{align*}
\dfrac{k^2}{4} &\leq \dfrac{1}{N^{2mn}} \left|\sum_{\mathbf{p} \in P_N^{mn}}\sum_{j = 1}^k\sum_{\mathbf{m} \in S_T} \widehat{h}_\epsilon(\mathbf{m})e_\mathbf{m}\left( Mx_j + \lambda_M\right)\right|^2\\
&\leq \dfrac{1}{N^{mn}} \sum_{\mathbf{p} \in P_N^{mn}}\left|\sum_{j = 1}^k\sum_{\mathbf{m} \in S_T} \widehat{h}_\epsilon(\mathbf{m})e_\mathbf{m}\left( Mx_j + \lambda_M\right)\right|^2\\
&\leq \dfrac{1}{N^{mn}}\sum_{\mathbf{p} \in P_N^{mn}}\left(\sum_{\mathbf{m} \in S_T}|\widehat{h}_\epsilon(\mathbf{m})|^2\right)\left(\sum_{\mathbf{m} \in S_T}\left|\sum_{j = 1}^k e_\mathbf{m}\left( Mx_j + \lambda_M\right)\right|^2 \right)\\
\end{align*}

By Bessel's inequality and the properties of the bump functions provided by lemma \ref{lemma1}, we find there is a constant $c_0$ such that,
\begin{align*}
\sum_{\mathbf{m} \in S_T}|\widehat{h}_\epsilon(\mathbf{m})|^2 &\leq \int_{\mathbb{T}^n} |h_\epsilon(z)|^2dz \\
&= \left(\int_\mathbb{T} g_\epsilon(z)^2 dz\right)^n = \dfrac{c_0}{\epsilon^n}.
\end{align*}

Using this and replacing constants by $c_1$, we do some careful manipulation of our products and sums.
\begin{align}
k^2 &\leq \dfrac{c_1}{N^{mn}\epsilon^n}\sum_{\mathbf{p} \in P_N^{mn}}\sum_{\mathbf{m} \in S_T}\left|\sum_{j = 1}^k e_\mathbf{m}\left( Mx_j + \lambda_M\right)\right|^2 \nonumber\\
&= \dfrac{c_1}{N^{mn}\epsilon^n}\sum_{\mathbf{p} \in P_N^{mn}}\sum_{\mathbf{m} \in S_T}\sum_{j = 1}^k\sum_{l = 1}^k e_\mathbf{m}\left(Mx_j - Mx_l\right) \nonumber\\
&= \dfrac{c_1}{\epsilon^n} \sum_{j = 1}^k \sum_{l = 1}^k \sum_{\mathbf{m} \in S_T} \dfrac{1}{N^{mn}}\sum_{\mathbf{p} \in P_N^{mn}} \prod_{u = 1}^n \prod_{v = 1}^m \exp(2\pi i m^u f_{u, v}(p_{u, v})(x_j^v-x_l^v)) \nonumber\\
&= \dfrac{c_1}{\epsilon^n}  \sum_{j = 1}^k \sum_{l = 1}^k  \left[\sum_{\|\mathbf{m}\|_\infty \leq T}\left(\prod_{u = 1}^n \prod_{v = 1}^m \sum_{p \in P_N}\dfrac{1}{N} \exp(2\pi i m^u f_{u, v}(p)(x_j^v-x_l^v))\right) - 1\right] \nonumber\\
&=: \dfrac{c_1}{\epsilon^n}  \sum_{j = 1}^k \sum_{l = 1}^k I_{j, l}. \label{sumI}
\end{align}

Here we have set $I_{j, l}$ to be the term inside the square brackets, which we isolate from the outer sum to simplify some of the working. Further simplifying this term,
\begin{align}
I_{j, l} &= \sum_{\|\mathbf{m}\|_\infty \leq T}\left(\prod_{u = 1}^n \prod_{v = 1}^m \sum_{p \in P_N}\dfrac{1}{N} \exp(2\pi i m^u f_{u, v}(p)(x_j^v-x_l^v))\right) - 1 \nonumber\\
&= \prod_{u = 1}^n\left( \sum_{w = -T}^T \prod_{v = 1}^m \sum_{p \in P_N} \dfrac{1}{N}\exp(2\pi i w f_{u, v}(p)(x_j^v-x_l^v))\right) - 1 \nonumber\\
&\leq \left( \sum_{w = -T}^T \prod_{v = 1}^m \left|\sum_{p \in P_N} \dfrac{1}{N}\exp(2\pi i w f_{\cdot, v}(p)(x_j^v-x_l^v))\right|\right)^n - 1 \label{Ijl}
\end{align}

The inequality comes from the fact that $I_{j, l}$ is real as, by matching the positive and negative $w$ terms together, we have the sum of complex numbers and their conjugates. Furthermore, we introduce $f_{\cdot, v}$ to represent the choice of $u$ which maximises the outer product. We now take $N \to \infty$, and as the outer terms have no dependence on $N$, it is enough for us to study the inner sum of (\ref{Ijl}), on which we apply our lemmas.

Suppose for some $j, l$, we have $x_j-x_l$ containing at least one irrational entry and, without loss of generality, suppose the first entry $x_j^1 - x_l^1$ is irrational. Then, when $w$ is non-zero, we may apply lemma \ref{lemma2} to observe as $N \to \infty$, the $v = 1$ term of the product over $v$ in (\ref{Ijl}) converges to zero, thus the product converges to zero. If $w = 0$, the inner sum is identically one. It follows $I_{j, l}$ can be bounded from above by an expression which approaches 0 as $N \to \infty$ in this case.

Next, suppose $x_j-x_l$ only has rational entries and is non-zero, and suppose that the entry with the largest denominator when written in reduced form is $a/b$. Without loss of generality, we may assume $x_j^1 - x_l^1 = a/b$. By applying lemma \ref{lemma3} we observe that for any choice of $\epsilon' > 0$, there is a constant $c_2$ such that,

% \begin{align*}
% &\lim_{N \to \infty}\sum_{w = -T}^T \prod_{v = 1}^m  \Big|\sum_{p \in P_N} \dfrac{1}{N}\exp(2\pi i w f_{\cdot, v}(p)(x_j^v-x_l^v))\Big|\\
% &\leq \sum_{w = -T}^T \Big|\lim_{N \to \infty}\sum_{p \in P_N} \dfrac{1}{N}\exp(2\pi i w f_{\cdot, 1}(p)(x_j^1-x_l^1))\Big| \leq c_2Tb^{-1/L + \epsilon'} + 1
% \end{align*}
% Next, suppose $x_j-x_l$ only has rational entries, and let $b$ be the smallest integer such that $b(x_j - x_l) \in \mathbb{Z}$. Then we can write $x_j - x_l = \frac{1}{b}a$ for some $a \in \mathbb{Z}^m$. By applying lemma \ref{lemma3} we observe that for any choice of $\epsilon' > 0$, there is a constant $c_2$ such that,
\begin{align*}
&\lim_{N \to \infty}\sum_{w = -T}^T \prod_{v = 1}^m  \Big|\sum_{p \in P_N} \dfrac{1}{N}\exp(2\pi i w f_{\cdot, v}(p)(x_j^v-x_l^v))\Big|\\
&\leq \sum_{w = -T}^T \Big|\lim_{N \to \infty}\sum_{p \in P_N} \dfrac{1}{N}\exp(2\pi i w f_{\cdot, 1}(p)(x_j^1-x_l^1))\Big| \leq c_2Tb^{-1/L + \epsilon'} + 1
\end{align*}
It naturally follows that as $N \to \infty$, $I_{j,l}$ can be bounded by $(c_2Tb^{-1/L + \epsilon'} + 1)^n - 1$ when at least one of the entries of $x_j - x_l$ is the reduced non-zero rational $a/b$. Finally, we note that when $x_j - x_l = 0$, by the uniqueness of our points, we must have $j = l$ which can only occur $k$ times. By counting the number of points in the lattice cube $\|\mathbf{m}\|_\infty \leq T$, we must have $I_{j, j} = (2T+1)^n - 1$ for $j = 1, ..., k$.

Grouping our summation over $j$ and $l$ by the bounds we have on $I_{j, l}$, and by counting the number of times we fall into each of our three cases, we can form a bound for $k^2$ using (\ref{sumI}). Recall that the number of times we fall into the rational case is exactly $s_b$ as defined previously. Substituting our bounds,
\begin{align*}
k^2 &\leq \dfrac{c_1}{\epsilon^n}\left(k \left(\left(2T + 1\right)^n - 1\right) + 2\sum_{b = 2}^\infty s_b\left(\left(c_2T b^{-1/L + \epsilon'} + 1\right)^n - 1\right) \right).
\end{align*}

The sum over $b$ can be bounded by the use of lemma \ref{lemma4}, with $r = 1/L-\epsilon'$ which is in the interval $(0, 1)$ for a small enough choice of $\epsilon'$. Replacing constants by $c_i$ as they appear, each of which is independent of $T$ and $k$,
\begin{align*}
k^2 &\leq \dfrac{c_1}{\epsilon^n}\left(k\left(\left(2T+1\right)^n - 1\right) + c_3 T^n k^{2-(1/L-\epsilon')/(2m)}\right)\\
\Longrightarrow k^2 &\leq c_4 \dfrac{T^n}{\epsilon^n} k^{2-(1/L-\epsilon')/(2m)}\\
\Longrightarrow k &\leq c_5 \left(\dfrac{T}{\epsilon}\right)^{2Lmn + 2\epsilon' nL^2m/(1-\epsilon'L)}
\end{align*}

By recalling our choice of $T$, we note that for any choice of $\delta > 0$, there exists an $\epsilon$ small enough such that $T/\epsilon \leq 1/\epsilon^{2+\delta}$. By incorporating our choice of $\epsilon'$ and the constant $c_5$ into $\delta$, it follows that for any choice of $\delta > 0$, there exists a sufficiently small $\epsilon_0$ such that if $\epsilon \in (0, \epsilon_0)$ and $k > 1/\epsilon^{2Lm(m+1)n + \delta}$, then there must exist a $M \in \mathcal{M}$ such that $MX$ is $\epsilon$-dense in $\mathbb{T}^n$, and so we attain the quantitative Glasner property.

If instead $m = 1$, we can apply the tighter bound of lemma \ref{lemma4} with the same value of $r = 1/L - \epsilon'$. Then for any choice of $\alpha > 0$, if $k \geq k_0(\alpha)$,
\begin{align*}
k^2 &\leq \dfrac{c_1}{\epsilon^n}\left(k\left(\left(2T+1\right)^n-1\right) + c_3 T^n k^{2+\alpha-1/L+\epsilon'}\right)\\
\Longrightarrow k &\leq c_4 \left(\dfrac{T}{\epsilon}\right)^{nL + (\epsilon' + \alpha')nL^2/(1-\epsilon'L-\alpha L)}
\end{align*}

We know that for any choice of $\delta > 0$, $T/\epsilon \leq 1/\epsilon^{2+\delta}$. By incorporating $\alpha$ and $\epsilon'$ into this choice, in addition to choosing our upper bound for $\epsilon$, $\epsilon_0$ as depending on $\alpha$, we find that for any choice of $\delta > 0$, there exists a sufficiently small $\epsilon_0$ such that if $\epsilon \in (0, \epsilon_0)$ and $k > 1/\epsilon^{2Ln + \delta}$, then there exists a $M \in \mathcal{M}$ such that $MX$ is $\epsilon$-dense in $\mathbb{T}^n$.

\end{proof}

\section{Proof of theorem \ref{theorem2}}

We now restrict theorem \ref{theorem1} to the case of each entry in our set of matrices depending on a single choice of prime, rather than independent choices of primes for each polynomial.

There are a number of definitions and statements from \cite{BF} which will be helpful here. We first define \textit{multiplicative complexity}, a way of bounding how linear combinations of polynomials may introduce common factors in their coefficients.

\begin{definition}
The vector $P(x) = [P_1(x), ..., P_r(x)] \in (\mathbb{Z}[x])^r$ has \textit{multiplicative complexity} $Q$ if, for all $a = (a_1, ..., a_r) \in \mathbb{Z}^r$ and $q \in \mathbb{Z}$ with $\gcd(a_1, ..., a_r, q) = 1$, we have the polynomial
\begin{align*}
(P(x) - P(0)) \cdot a = \sum_{i = 1}^L b_jx^j
\end{align*}
satisfying $\gcd(b_1, ..., b_L, q) \leq Q$.
\end{definition}

We will also require Corollary 2.4 from \cite{BF}, however we observe from the proof it can easily be extended to the case of non-square matrices.

\begin{lemma}
\label{lemma5}
Let $M(x) \in M_{n \times m}(\mathbb{Z}[x])$ be a matrix with polynomial entries and $\mathbf{m} \in \mathbb{Z}^n \setminus \{0\}$ such that the entries of $\mathbf{m}^t(M(x) - M(0))$ are linearly independent over $\mathbb{Z}$. Then $\mathbf{m}^t M(x)$ has multiplicative complexity $Q$, where
\begin{align*}
Q = Q(M(x), \mathbf{m}) = m! (n\|M(x) - M(0)\| \|\mathbf{m}\|_\infty)^m.
\end{align*}
\end{lemma}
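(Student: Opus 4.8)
The plan is to reduce Lemma~\ref{lemma5} to the square-matrix statement of Corollary~2.4 from \cite{BF} by embedding our $n \times m$ matrix $M(x)$ into a square matrix, or equivalently by tracking the constants in the original proof and observing that squareness is never actually used. First I would recall what Corollary~2.4 of \cite{BF} asserts: for $M(x) \in M_{n\times n}(\mathbb{Z}[x])$ and $\mathbf{m} \in \mathbb{Z}^n\setminus\{0\}$ with the entries of $\mathbf{m}^t(M(x)-M(0))$ linearly independent over $\mathbb{Z}$, the vector $\mathbf{m}^tM(x)$ has multiplicative complexity $n!(n\|M(x)-M(0)\|\|\mathbf{m}\|_\infty)^n$. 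The key observation is that the row vector $\mathbf{m}^tM(x)$ is a length-$m$ vector of polynomials whose coefficients are bounded in terms of $\|\mathbf{m}\|_\infty$, $n$ (the length of $\mathbf{m}$, i.e.\ the number of rows summed), and $\|M(x)-M(0)\|$, and whose \emph{length} is $m$. So the combinatorial estimate bounding the gcd of the coefficients of $(\mathbf{m}^tM(x) - \mathbf{m}^tM(0))\cdot a$ for $a \in \mathbb{Z}^m$, $q\in\mathbb{Z}$ with $\gcd(a_1,\dots,a_m,q)=1$, should depend on $m$ through a factor $m!$ and on the coefficient sizes through $(n\|M(x)-M(0)\|\|\mathbf{m}\|_\infty)^m$, exactly matching the claimed $Q$.

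Concretely, I would proceed as follows. Write $\mathbf{m}^t(M(x)-M(0)) = [P_1(x),\dots,P_m(x)]$, a vector of polynomials with no constant term, linearly independent over $\mathbb{Z}$, with each coefficient bounded in absolute value by $n\|M(x)-M(0)\|\|\mathbf{m}\|_\infty$ (triangle inequality over the $n$ rows). Now the quantity we must control is: given $a\in\mathbb{Z}^m$ and $q$ with $\gcd(a_1,\dots,a_m,q)=1$, bound $\gcd(b_1,\dots,b_L,q)$ where $\sum_j b_j x^j = \sum_{i=1}^m a_i P_i(x)$. This is precisely the statement that the vector $[P_1,\dots,P_m]$ has multiplicative complexity at most $m!(n\|M(x)-M(0)\|\|\mathbf{m}\|_\infty)^m$. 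I would then either (i) cite the relevant lemma inside the proof of Corollary~2.4 of \cite{BF} that establishes this for any finite list of $\mathbb{Z}$-linearly independent polynomials with bounded coefficients — noting its proof is insensitive to whether the list arose from a square matrix — or (ii) if that intermediate lemma is not cleanly isolated in \cite{BF}, pad $M(x)$ to a square matrix of size $N = \max(m,n)$: append zero rows or zero columns as needed and correspondingly pad $\mathbf{m}$ with zeros. Appending zero columns adds zero polynomials to the list, which does not change any gcd and does not affect linear independence of the nonzero entries; appending zero rows and padding $\mathbf{m}$ with zeros leaves $\mathbf{m}^t M(x)$ unchanged. One must only check the bookkeeping: the padded matrix has the same norm, the padded $\mathbf{m}$ has the same $\|\cdot\|_\infty$, and $N! (N \cdot \text{norm} \cdot \|\mathbf{m}\|_\infty)^N$ with $N=\max(m,n)$ still dominates (or can be replaced by) the sharper $m!(n\cdot\text{norm}\cdot\|\mathbf{m}\|_\infty)^m$ by re-examining where each factor enters.

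The main obstacle is route (ii)'s bound-sharpening: the naive padding gives $Q$ with $N=\max(m,n)$ in \emph{all} the slots, which is weaker than the claimed $m!(n\|M(x)-M(0)\|\|\mathbf{m}\|_\infty)^m$ whenever $n < m$ or $n > m$. To get the stated constant one really does need to trace the proof of Corollary~2.4 in \cite{BF} and see that the exponent and factorial come from the \emph{number of polynomials in the list} (here $m$, the number of columns) while the base comes from a coefficient bound on $\mathbf{m}^t(M(x)-M(0))$ (here governed by $n$, the number of rows, through the triangle inequality). So the honest write-up is route (i): state that the proof of Corollary~2.4 of \cite{BF} in fact proves the more general claim for a list of $r$ linearly independent integer polynomials whose coefficients are bounded by $B$, yielding multiplicative complexity $r! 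B^r$, and then apply it with $r = m$ and $B = n\|M(x)-M(0)\|\|\mathbf{m}\|_\infty$. I expect the verification that ``$n$ enters only through the coefficient bound and $m$ only through the list length'' to be the one genuinely substantive check; everything else is unwinding definitions.
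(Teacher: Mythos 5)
Your route (i) is exactly the paper's argument: the paper invokes Proposition 2.2 of \cite{BF} (the gcd bound $m!\max_i\|v_i\|_\infty^m$ for $m$ linearly independent integer vectors), applies it to the coefficient vectors of $P(x)=\mathbf{m}^t(M(x)-M(0))$, and bounds each $\|p_i\|_\infty$ by $n\|\mathbf{m}\|_\infty\|M(x)-M(0)\|$ via the triangle inequality over the $n$ rows. Your identification that $m$ enters only through the list length and $n$ only through the coefficient bound is precisely the substance of the paper's proof, so the proposal is correct and takes essentially the same approach.
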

Here $\|M(x) - M(0)\|$ denotes the largest polynomial coefficient in any entry of $M(x) - M(0)$.
\begin{proof}
First, we make note of proposition 2.2 of Bulinski and Fish, which states that for any linearly independent vectors $v_1, ..., v_m \in \mathbb{Z}^{r}$ and coprime integers $a_1, ..., a_m, q$, we have $\gcd\left(\sum_{i=1}^m a_iv_i, q\right) \leq m! \max_{i}\| v_i\|_\infty^m$, where the $\gcd$ is computed by taking the $\gcd$ of coordinate entries of the first argument and $q$ \cite{BF}. 

Let $P(x) = [p_1(x), ..., p_m(x)] = \mathbf{m}^t\left(M(x) - M(0)\right)$, and note that $P(0) = 0$. It is given that these polynomials are linearly independent over $\mathbb{Z}$, so by treating them as vectors we can apply the proposition. It follows that for any coprime integers $a_1, ..., a_m, q \in \mathbb{Z}$, we also have $\gcd(a_1p_1(x) + ... + a_mp_m(x), q) \leq m! \max_i \| p_i\|_\infty^m$, where the $\gcd$ is computed by taking the $\gcd$ of the coefficients of the first argument and the second argument. 

This is exactly the definition of multiplicative complexity, where $a = (a_1, ..., a_m)$. In our treatment of polynomials as vectors, $\|p_i\|_\infty$ refers to the largest coefficient on $p_i$, which must be bounded by $n\|\mathbf{m}\|_\infty \|M(x) - M(0)\|_\infty$. It follows $\mathbf{m}^tM(x)$ has the desired multiplicative complexity.
\end{proof}

Finally, we further require a version of lemma \ref{lemma3} from Shparlinski on exponential sums for polynomials \cite{Sh}.

\begin{lemma}
\label{lemma6}
Let $P(x) = a_0 + a_1x + ... + a_Lx^L \in \mathbb{Z}[x]$ and let $b \in \mathbb{Z}$ be such that $\gcd(a_0, ..., a_L, b) = 1$. Then, for any choice of $\delta > 0$, there exists a constant $C$ independent of $b$ such that,
\begin{align*}
\sum_{1 \leq r \leq b, \gcd(r, b) = 1} \exp\left(2\pi i \dfrac{1}{b}P(r)\right) &\leq Cb^{1 - 1/L + \delta}.
\end{align*}
\end{lemma}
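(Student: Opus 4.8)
The plan is to reduce to prime‑power moduli, where the restricted sum becomes a difference of two \emph{complete} polynomial exponential sums, and then to invoke the classical bound of Hua (see also Chen and Nechaev) on such sums. Write $T(P,b):=\sum_{1\le r\le b,\,\gcd(r,b)=1}e_b(P(r))$ with $e_b(x)=\exp(2\pi i x/b)$, and $S(f,q):=\sum_{r\bmod q}e_q(f(r))$ for the corresponding complete sum; I will use the hypothesis in the form $\gcd(a_1,\dots,a_L,b)=1$, since the constant term $a_0=P(0)$ contributes only the unimodular factor $e_b(a_0)$ and is irrelevant to every estimate below. First, $T(\cdot,b)$ is multiplicative in $b$: if $b=b_1b_2$ with $\gcd(b_1,b_2)=1$, the CRT bijection on units together with $1/b\equiv\overline{b_2}/b_1+\overline{b_1}/b_2\pmod1$ gives $T(P,b)=T(\overline{b_2}P,b_1)\,T(\overline{b_1}P,b_2)$, and multiplying $P$ by a unit mod $b_i$ leaves the gcd of its coefficients with $b_i$ unchanged, so the hypothesis passes to each factor. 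Hence it suffices to bound $|T(P,p^k)|$ for prime powers $p^k\|b$.

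For a prime power, isolate the non‑units: $T(P,p^k)=S(P,p^k)-\sum_{r\bmod p^k,\,p\mid r}e_{p^k}(P(r))$. Writing $r=ps$ with $s$ running mod $p^{k-1}$ and $P(ps)=a_0+\sum_{j\ge1}a_jp^js^j$, the fractional part of $P(ps)/p^k$ is $a_0/p^k+\widetilde P(s)/p^{k-1}$ with $\widetilde P(s):=\sum_{j=1}^La_jp^{j-1}s^j$, so the removed term equals $e_{p^k}(a_0)\,S(\widetilde P,p^{k-1})$. Thus $T(P,p^k)$ is a difference of two complete polynomial character sums of degree $\le L$, and I bound each by the classical estimate: if $f\in\mathbb Z[x]$ has degree $\le L$ and $d$ is the gcd of its non‑constant coefficients together with $q$, then $|S(f,q)|\le C(L)\,d^{1/L}q^{1-1/L}$. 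For $f=P$ we have $d=1$, hence $|S(P,p^k)|\le C(L)p^{k(1-1/L)}$. For $f=\widetilde P$, the gcd of $a_1,pa_2,\dots,p^{L-1}a_L$ with $p^{k-1}$ is $p^v$ with $v\le L-1$ (some $a_j$ is a $p$‑unit, so $v\le(j-1)+0$), whence $|S(\widetilde P,p^{k-1})|\le C(L)\,p^{v/L}p^{(k-1)(1-1/L)}\le C(L)\,p^{k(1-1/L)}$, the last step because $v/L+(k-1)(1-1/L)\le(L-1)/L+(k-1)(L-1)/L=k(1-1/L)$. Therefore $|T(P,p^k)|\le2C(L)\,p^{k(1-1/L)}$.

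Reassembling by multiplicativity, $|T(P,b)|\le\prod_{p^k\|b}2C(L)\,p^{k(1-1/L)}=(2C(L))^{\omega(b)}\,b^{1-1/L}$, and since $\omega(b)\ll\log b/\log\log b$ we have $(2C(L))^{\omega(b)}=b^{o(1)}$; so for any $\delta>0$ there is $C=C(L,\delta)$ with $(2C(L))^{\omega(b)}\le Cb^{\delta}$, giving $|T(P,b)|\le Cb^{1-1/L+\delta}$ as claimed. Note that the $\delta$ loss comes entirely from this $\omega(b)$ factor, not from the character‑sum estimates themselves.

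The main obstacle is the classical complete‑sum bound $|S(f,q)|\le C(L)\,d^{1/L}q^{1-1/L}$ invoked above; everything else is bookkeeping. Its proof has two parts. For $q=p$ prime one needs square‑root cancellation $|S(f,p)|\ll_L\sqrt p$, which follows from Weil's bound for one‑variable character sums, or (since the target exponent $1-1/L$ is at least $1/2$ for $L\ge2$, and $\sqrt p$ already suffices) by applying Weyl differencing $L-1$ times to reduce to a linear sum and counting the solutions of the resulting multiplicative congruence mod $p$, the linear case $L=1$ and the degenerate cases with $p$ bounded in terms of $L$ being trivial. For $q=p^k$ with $k\ge2$ one runs the standard stationary‑phase/Hensel descent: decompose $r=u+p^{\lceil k/2\rceil}t$, Taylor‑expand $P(r)\bmod p^k$ so that the $t$‑sum is a complete additive character sum vanishing unless $P'(u)\equiv0$ modulo a suitable power of $p$, and bound the number of such critical $u$ using $\deg P'=L-1$ — this is exactly where care with coefficients divisible by powers of $p$ is needed and where the factor $d^{1/L}$ is tracked. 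Since Shparlinski \cite{Sh} supplies precisely this estimate, in the final write‑up I would cite it and present only the multiplicativity and bookkeeping steps above.
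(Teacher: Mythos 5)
Your argument is correct, but there is nothing in the paper to compare it against at the level of technique: the paper does not prove Lemma~\ref{lemma6} at all, it simply quotes the estimate from Shparlinski \cite{Sh}. What you have written out --- multiplicativity of the restricted sum via CRT, isolating the non-units at a prime power to express $T(P,p^k)$ as $S(P,p^k)-e_{p^k}(a_0)S(\widetilde P,p^{k-1})$, applying the classical Hua/Chen/Nechaev bound $|S(f,q)|\le C(L)d^{1/L}q^{1-1/L}$ to each complete sum, and absorbing the $(2C(L))^{\omega(b)}$ factor into $b^{\delta}$ --- is the standard derivation, and your bookkeeping checks out (in particular the verification that $v/L+(k-1)(1-1/L)\le k(1-1/L)$ when $v\le L-1$, and the fact that some $a_j$ being a $p$-unit forces $v\le L-1$). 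Deferring the complete-sum estimate itself to the classical literature is reasonable, and your sketch of its proof (Weil at primes, stationary-phase descent at higher prime powers) is accurate.

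One point deserves emphasis rather than the parenthetical treatment you give it. The lemma as stated assumes $\gcd(a_0,a_1,\dots,a_L,b)=1$, with the \emph{constant} term included in the gcd; under that hypothesis the conclusion is false (take $P(x)=1+bx$, $L=1$: the sum has modulus $\phi(b)$ while the claimed bound is $Cb^{\delta}$). You silently replace the hypothesis by $\gcd(a_1,\dots,a_L,b)=1$, describing the two as interchangeable because $a_0$ only contributes a unimodular factor; they are not interchangeable --- your version is strictly stronger, and it is the one your proof actually uses and the one under which the lemma is true. This is the right reading: in the application, the multiplicative complexity bound of Lemma~\ref{lemma5} controls precisely the gcd of the non-constant coefficients of $\mathbf{m}^t(M(x)-M(0))\cdot a$ with $b$, so the hypothesis you prove the lemma under is the one that is actually available. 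You should state explicitly that you are correcting the hypothesis, not restating it.
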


\begin{proof}[Proof of theorem \ref{theorem2}]
Much of this proof is similar to that of theorem \ref{theorem1} and theorem \ref{theoremBF} by Bulinski and Fish, so for the sake of brevity we will place emphasis on where this proof is different and outline other steps.

Begin by letting $\epsilon > 0$, and $X = \{x_1, ..., x_k\}$ such that for all primes $p$, the set $M(p)X$ is not $\epsilon$-dense in $\mathbb{T}^n$. Following the same steps as theorem \ref{theorem1}, we continue until we have the corresponding inequality,
\begin{align*}
k^2 &\leq \dfrac{c_1}{\epsilon^n} \sum_{\mathbf{m} \in S_T} \sum_{1 \leq  j, l \leq k} \dfrac{1}{N}\sum_{p \in P_N} e_\mathbf{m}\big(M(p)(x_j - x_l)\big)
\end{align*}

We now take $N \to \infty$, and consider different cases depending on the value of $x_j - x_l$. First, suppose that $x_j - x_l$ has at least one irrational entry. Using the linear independence of the $f_{i, j}$, set $P(x) = [P_1(x), ..., P_m(x)] :=\mathbf{m}^t(M(x) - M(0))$ to be a vector of linearly independent polynomials over $\mathbb{Z}$ and hence also over $\mathbb{Q}$. As in the proof of theorem 2.8 of \cite{BF}, $P(x) \cdot (x_j - x_l)$ will have at least one irrational non-constant coefficient. It immediately follows from lemma \ref{lemma2} that as $N \to \infty$, the inner sum will converge to 0 and the term will not contribute to anything in the overall expression.

Now suppose $x_j - x_l$ is rational and non-zero, and suppose that $b$ is the smallest positive integer such that $b(x_j - x_l) \in \mathbb{Z}^m$. We can then write $x_j - x_l = \dfrac{1}{b}a$ where $a \in \mathbb{Z}^m$ and $\gcd(a^1, ..., a^m, b) = 1$. Recall that by lemma \ref{lemma5}, the multiplicative complexity of $\mathbf{m}^t (M(x) - M(0))$ is given by $Q$, where, by considering that $\mathbf{m} \in S_T$,
\begin{align*}
Q &\leq m!(n \| M(x) - M(0)\| \| \mathbf{m} \|_\infty)^m\\
&\leq m! (n \| M(x) - M(0)\| )^m T^m.
\end{align*}

Let $P(x) = \mathbf{m}^t M(x) a$. Then, due to Dirichlet's equidistribution of polynomials evaluated at the primes and lemma \ref{lemma6}, we observe that for any choice of $\delta > 0$, there exists a constant $C_{L, \delta}$ such that
\begin{align*}
\lim_{N \to \infty} \dfrac{1}{N} \Bigg|\sum_{p \in P_N} e_{\mathbf{m}}\left(\dfrac{1}{b}M(p)a\right)\Bigg| &= \lim_{N \to \infty} \dfrac{1}{N} \Bigg|\sum_{p \in P_N} \exp\left(2\pi i \dfrac{1}{b}P(p)\right)\Bigg|\\
&= \dfrac{1}{\phi(b)}\Bigg|\sum_{1 \leq r \leq b, \gcd(r, b) = 1} \exp\left(2\pi i \dfrac{1}{b}P(r)\right)\Bigg|\\
&\leq C_{L, \delta} \left(\dfrac{Q}{b}\right)^{1/L - \delta},
\end{align*}
where in the last inequality, we have used that $\phi(b) \gg b^{1-\delta}$ for any choice of $\delta > 0$ where $\phi$ is Euler's totient function. Finally, in the case that $x_j-x_l$ is zero, we must have $j = l$ and the inner sum over $P_N$ will be identically 1. This case occurs $k$ times exactly.

Putting these cases together, we note that the number of times we fall into the rational case is exactly $h_b$ by definition. It follows that,
\begin{align*}
k^2 \leq \dfrac{c_1}{\epsilon^n} \sum_{\mathbf{m} \in S_T} \left(\sum_{b = 2}^\infty h_b \left(\dfrac{Q}{b}\right)^{1/L - \delta} + k\right).
\end{align*}

This is nearly the same bound obtained by Bulinski and Fish in theorem 2.8 \cite{BF}, with the only difference being the $h_b$ counting points in $\mathbb{T}^m$ instead of $\mathbb{T}^n$. Applying proposition 2.7 of Bulinski and Fish, we get the bound on the sum,
\begin{align*}
\sum_{b=2}^\infty h_b b^{-1/L+\delta} \leq c_2 k^{2-(1/L-\delta)/(m+1)}
\end{align*}

Furthermore, in a similar manner to lemma \ref{lemma4}, it is possible to show that when $m = 1$, by using the tighter bounds available on $H_b$, we will find,
\begin{align*}
\sum_{b=2}^\infty h_b b^{-1/L+\delta} \leq c_2 k^{2-1/L+\delta}.
\end{align*}

Continuing the proof of theorem \ref{theoremBF} by Bulinski and Fish nearly verbatim with these bounds in mind gives the final result.
\end{proof}

\section{Acknowledgements}

I would like to thank Professor Alexander Fish for acting as a supervisor and reviewer during the construction of this paper, without whom this project would have not been possible. Additionally, I would like to thank the anonymous reviewer for their invaluable feedback in providing corrections and making this paper more digestible.

\bibliography{bibliography.bib}

\end{document}